\newtheorem{thm}{Theorem}[section]
\newtheorem{lem}[thm]{Lemma}
\newtheorem{pro}[thm]{Proposition}
\newtheorem{cor}[thm]{Corollary}
\theoremstyle{definition}
\newtheorem{den}[thm]{Definition}
\newtheorem{example}[thm]{Example}
\theoremstyle{remark}
\newtheorem{rem}[thm]{Remark}
\numberwithin{equation}{section}
\begin{document}

\title[Approximate cohomology] {Approximate  cohomology in Banach algebras}
\author[A. Shirinkalam and A. Pourabbas ]{A. Shirinkalam and  A. Pourabbas }
\address{Faculty of Mathematics and Computer
Science, Amirkabir University of Technology, 424 Hafez Avenue,
Tehran 15914, Iran}
\email{shirinkalam\_a@aut.ac.ir}

 \email{arpabbas@aut.ac.ir}

\subjclass[2010]{Primary: 46M20; Secondary: 46M18, 46H25}

\keywords{Approximate cohomology, approximate amenable,  approximate Hochschild cohomology, approximate homotopy, co-chain complex}

\begin{abstract}
 We introduce the notions of approximate cohomology and approximate homotopy in Banach algebras  and we study the relation between them.  We show that the approximate homotopically equivalent cochain complexes give the same approximate cohomologies. As an special case, approximate Hochschild cohomology is introduced and studied.

\end{abstract}

\maketitle

\section{Introduction and Preliminaries}
The notion of amenability for Banach algebras was first introduced by Johnson  \cite{J}. He  showed that the definition of amenability is equivalent to vanishing of $\mathcal{H}^n(\mathcal{A},X^*)$ for all $n$ and all $ \mathcal{A}$-module $X$. Later, Ghahramani and Loy \cite{GL} generalized the notion of amenability and they introduced the approximately amenable Banach algebras.  Here we introduce  an equivalent definition for approximate amenability. To do this first we generalize the definition of 
Hochschild cohomology to approximate cohomology and then we show that the definition of approximate  amenability is equivalent to vanishing of higher dimension approximate cohomology. 

The paper is organized as follows. After some preliminary material and definitions in  section 1,  we define a concept of approximate cohomology for Banach algebras in section 2. Also we define a notion of approximate homotopically equivalence relation for  two cochain complexes of Banach spaces and we show that if the cochains are approximate homotopically equivalent, then they will have the same approximate cohomologies.
In section 3, we apply these notions for a certain cochain complex  and we investigate some useful results.

     Let $ \mathcal{A} $ a Banach algebra and let $X$ be a Banach  $ \mathcal{A} $-bimodule.  The  dual space $ X^* $ of $  X$
becomes a Banach $\mathcal{A} $-bimodule via the following actions
\begin{eqnarray}\label{action}
	 \langle x, a\cdot \varphi \rangle =\langle x\cdot a, \varphi\rangle, \qquad \langle x, \varphi\cdot a \rangle =\langle a\cdot x, \varphi\rangle \qquad  (a\in \mathcal{A}, x\in X, \varphi \in X^*).
\end{eqnarray}
For a Banach algebra $ \mathcal{A} $, the projective tensor product $ \mathcal{A} \hat{\otimes}\mathcal{A}  $ is a Banach algebra with respect to the multiplication defined  by $ (a\otimes b)(c\otimes d)=ac \otimes bd $  \cite {BD}. If  $X$ and $Y$ are Banach  $  \mathcal{A}$-bimodules, then
$ X \hat{\otimes}Y $ becomes a Banach  $  \mathfrak{A}$-bimodule with the  canonical actions. Also, we use the isometric isomorphism $ (X \hat{\otimes}Y)^*\simeq \mathcal{BL}(X,Y^*)  $(\cite{BD}).

Recall that an $ \mathcal{A} $-bimodule $ X $ is called neo-unital if every $ x \in X $ can be written as $ a \cdot y \cdot b $ for some $a, b \in  \mathcal{A}  $ and $ y \in X $.
\begin{rem}\label{ess}
	Let $ \mathcal{A} $ be a Banach algebra with a bounded approximate identity and let $ X $ be a Banach $ \mathcal{A} $-bimodule. Then by \cite[Proposition 1.8]{J}, the subspace $ X_{ess} := \mathrm{lin} \{a\cdot x\cdot b : a, b \in  \mathcal{A},\, x \in X   \}
	$ is a  neo-unital closed sub-$ \mathcal{A} $-bimodule of $ X $. Moreover, $ X_{ess}^{\perp} $ is complemented in $ X^* $. \end{rem}

\begin{den}\cite[Definition 5.2.1]{R}
	A cochain complex  $ E=(E_n,\delta^n)_{n\in \mathbb{Z}} $ of Banach spaces is a  sequence of Banach spaces along with bounded linear maps  $$\cdot \cdot \cdot \xrightarrow{\delta^{n-1}}E_n \xrightarrow{\delta^{n}}E_{n+1}\xrightarrow{\delta^{n+1}}E_{n+2}\xrightarrow{\delta^{n+2}}\cdot \cdot \cdot  $$
	such that for every $ n\in \mathbb{Z} $, $\delta^{n}\circ \delta^{n-1}=0.  $
\end{den}
\begin{den}\cite[Definition 5.2.3]{R}
	Let $ E=(E_n,\delta^n)_{n\in \mathbb{Z}} $ be a cochain complex of Banach spaces.
	Let $ \mathcal{Z}^n(E):=\ker \delta^n $ and let $ \mathcal{B}^n(E):=\hbox{ran}\, \delta^{n-1 }$.
	Then
	$$ \mathcal{H}^n(E):=\mathcal{Z}^n(E)/\mathcal{B}^n(E), $$

	is called  the $ n- $th  cohomology group of the cochain complex $ E. $
\end{den}
	Let $ \mathcal{A} $ be a Banach algebra and let $ X $ be  a Banach $ \mathcal{A} $-bimodule. Then, for every $ n \in \mathbb{N} $, the space of all bounded, $ n- $linear maps  from $ \mathcal{A} $ into $ X $
	is denoted by
	$ \mathcal{BL}^n(\mathcal{A},X) $.
	 \begin{example}\label{Hoc}
	Let $ \mathcal{A} $ be a Banach algebra and let $ X $ be  a Banach $ \mathcal{A} $-bimodule.  For every $ n\geq 1$, we take $ E_n=\mathcal{BL}^n(\mathcal{A},X)$.
	Let $ \delta^n:\mathcal{BL}^n(\mathcal{A},X)\rightarrow \mathcal{BL}^{n+1}(\mathcal{A},X) $ be defined by
	\begin{align*}
		(\delta^nT)(a_1,..., a_{n+1})
		&:=a_1\cdot T(a_2,..., a_{n+1})\\
		&+\sum_{k=1}^{n}(-1)^kT(a_1,..., a_ka_{k+1},..., a_{n+1})\\
		&+(-1)^{n+1}T(a_1,..., a_{n})\cdot a_{n+1}, \quad (T\in E_n, a_1,..., a_{n+1}\in \mathcal{A}).
	\end{align*}	
	In the case $n=0$ we take $ E_0=X $ and $\delta^0:X\to\mathcal{BL}^1(\mathcal{A},X)$ is given by $\delta^0(x)(a)=
	a\cdot x-x\cdot a.$
	Then the sequence $ E=(E_n,\delta^n)_{n\geq 0} $ is called the Hochschild cochain complex and $$\mathcal{H}^n(\mathcal{A},X):=\mathcal{Z}^n(\mathcal{A},X)/\mathcal{B}^n(\mathcal{A},X),\quad\hbox{and}\quad \mathcal{H}^0(\mathcal{A},X)=\{x\in X:a\cdot x=x \cdot a,\forall a\in\mathcal{A}\}$$ is called the $ n$-th Hochschild cohomology of 	$\mathcal{A}$
	with coefficients in $X$.
		\end{example}
\begin{den}\cite[Definition 5.2.5]{R}
Let $ E=(E_n,\delta^n_E)_{n\in \mathbb{Z}} $	 and $ F=(F_n,\delta^n_F)_{n\in \mathbb{Z}} $ be two cochain complexes of Banach spaces. A morphism $ \varphi : E\rightarrow F $ is a family $ (\varphi_n)_{n\in \mathbb{Z}} $ of continuous linear maps $ \varphi_n:E_n \rightarrow  F_n $ such that for every $n\in \mathbb{Z}  $, the following diagram commutes,
\begin{center}
	\begin{tikzpicture}
	\matrix [matrix of math nodes,row sep=1cm,column sep=1cm,minimum width=1cm]
	{
		|(A)| \displaystyle E_n  &   |(B)|  E_{n+1}    \\
		|(C)|    F_n     &   |(D)|  F_{n+1}. \\
	};
	\draw[->]    (A)-- node [above] { $ \delta^n_E $}(B);
	\draw[->]   (A)--  node [left] { $ \varphi_n $} (C);
	\draw[->]  (C)-- node [below]   { $\delta^n_F $}(D);
	\draw[->]  (B)-- node [right]  {$ \varphi_{n+1} $}(D);
	\end{tikzpicture}
\end{center}
\end{den}
\section{Approximate cohomology }
In  Example \ref{Hoc}, when we consider $n=1$  the elements of
$\mathcal{Z}^1(\mathcal{A},X)$ are called continuous dervations and the elements of $\mathcal{B}^1(\mathcal{A},X)$ are called the inner derivations.

A continuous derivation $D:\mathcal{A}\longrightarrow X$ is
approximately inner if there is a  net
$(x_{\nu})\subset X$
such that for every $a\in A$
$$D(a)=\lim_{\nu}\delta^0(x_{\nu})(a)=\lim_{\nu}
a\cdot x_{\nu}-x_{\nu}\cdot a.$$ the limit  being in norm. That is, $D=\hbox{st}-\lim_{\nu}\delta^0(x_{\nu})$, the limit is taken in the strong  topology of
$\mathcal{B}(\mathcal{A},X)$.
A Banach algebra $ \mathcal{A} $ is called approximately amenable, if for every Banach $ \mathcal{A}$-bimodule $ X $,  every continuous  derivation $  D:\mathcal{A}\rightarrow X^* $ is approximately  inner (\cite[Definition 1.2]{GL}).
 This definition  was a motivation for  this section to introduce a  concept of approximate cohomology and  approximate homotopy. We show that if two complexes are  approximate homotopically equivalent, then they give the same approximate cohomologies.
\begin{den}
 Let $ (E_n)_{n\in \mathbb{Z}} $ be a sequence of Banach spaces such that each $ E_n $ is equipped with a topology $ \tau_n. $ For every $ n\in \mathbb{Z} $, suppose that $ \delta^n:(E_n, \tau_n) \rightarrow (E_{n+1},\tau_{n+1}) $ is a continuous linear map  and let $ \delta^{n}\circ \delta^{n-1}=0. $ Then the triple $ (E,\tau,\delta) $ is called a $ \tau $-cochain complex, where $ E=(E_n)_{n\in \mathbb{Z}}$, $ \tau=(\tau_n)_{n\in \mathbb{Z}}  $ and $ \delta=(\delta^n)_{n\in \mathbb{Z}}  $.
\end{den}
Sometimes for simplicity, we use  $ E=(E_n,\tau_n, \delta^n)_{n\in \mathbb{Z}}  $ for a $ \tau $-cochain complex.
\begin{den}
	Let $ E=(E_n,\tau_n, \delta^n)_{n\in \mathbb{Z}} $ be a $ \tau $-cochain complex of Banach spaces. For every $ n\in \mathbb{Z} $,
	let  $ \mathcal{Z}^n(E):=\ker \delta^n$ and $ \mathcal{B}^n(E):=\hbox{ran}\, \delta^{n-1 }$.
	Then $ \mathcal{Z}^n(E) $ is a $ \tau_n$-closed subspace of $E_n  $ and
	 $ \mathcal{B}^n(E)\subseteq \mathcal{Z}^n(E) $ is a subspace. We define
 $$\mathcal{H}^n_{app}(E):=\mathcal{Z}^n(E)/{\overline{\mathcal{B}^n(E)}}^\tau,$$
	 where the closure is taken in the  topology $ \tau_n $ of $ E_n $.
	 We call this quotient space, the $ n$-th \textit{approximate cohomology} of the complex $ E. $
\end{den}
\begin{pro}\label{comp}
	Let $ E=(E_n,\tau_n, \delta^n)_{n\in \mathbb{Z}} $ be a $ \tau $-cochain complex of Banach spaces. If $  \mathcal{H}^n(E)=\{ 0\} $, then $  \mathcal{H}^n_{app}(E)=\{ 0\}. $
	\end{pro}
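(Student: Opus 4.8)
The plan is to exploit the elementary squeeze
\[
\mathcal{B}^n(E)\subseteq\overline{\mathcal{B}^n(E)}^{\,\tau}\subseteq\mathcal{Z}^n(E),
\]
and then to feed in the hypothesis to collapse all three spaces onto one another. No analysis is really needed; the argument is purely formal, and the only point requiring attention is the location of the closure.

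First I would recall from the preceding definition that $\mathcal{Z}^n(E)=\ker\delta^n$ is a $\tau_n$-closed subspace of $E_n$, since it is the preimage of $\{0\}$ under the $\tau_n$–$\tau_{n+1}$-continuous map $\delta^n$, and that $\mathcal{B}^n(E)=\operatorname{ran}\delta^{n-1}\subseteq\mathcal{Z}^n(E)$ because $\delta^n\circ\delta^{n-1}=0$. Since $\mathcal{Z}^n(E)$ is $\tau_n$-closed and contains $\mathcal{B}^n(E)$, it also contains the $\tau_n$-closure $\overline{\mathcal{B}^n(E)}^{\,\tau}$; this gives the displayed chain of inclusions.

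Next I would invoke the assumption. The statement $\mathcal{H}^n(E)=\mathcal{Z}^n(E)/\mathcal{B}^n(E)=\{0\}$ is precisely $\mathcal{B}^n(E)=\mathcal{Z}^n(E)$. Substituting this into the inclusions above forces
\[
\mathcal{Z}^n(E)=\mathcal{B}^n(E)\subseteq\overline{\mathcal{B}^n(E)}^{\,\tau}\subseteq\mathcal{Z}^n(E),
\]
so $\overline{\mathcal{B}^n(E)}^{\,\tau}=\mathcal{Z}^n(E)$, and therefore $\mathcal{H}^n_{app}(E)=\mathcal{Z}^n(E)/\overline{\mathcal{B}^n(E)}^{\,\tau}=\{0\}$.

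I do not anticipate any genuine obstacle. The one subtlety worth flagging explicitly in the write-up is that the closure defining $\mathcal{H}^n_{app}(E)$ is taken inside $E_n$ (equivalently, inside $\mathcal{Z}^n(E)$), so enlarging the already dense subspace $\mathcal{B}^n(E)$ of $\mathcal{Z}^n(E)$ to its $\tau_n$-closure cannot escape $\mathcal{Z}^n(E)$; this is exactly what the $\tau_n$-closedness of $\mathcal{Z}^n(E)$ guarantees.
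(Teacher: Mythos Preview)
Your argument is correct and is essentially the paper's proof stripped to its core: the paper phrases the same observation by constructing the canonical surjection $\tilde{\theta}\colon \mathcal{H}^n(E)\to\mathcal{H}^n_{app}(E)$ induced by the inclusion $\mathcal{B}^n(E)\subseteq\overline{\mathcal{B}^n(E)}^{\tau_n}$, from which the conclusion is immediate. Your direct squeeze $\mathcal{B}^n(E)\subseteq\overline{\mathcal{B}^n(E)}^{\tau_n}\subseteq\mathcal{Z}^n(E)$ is exactly the content of that surjection, so the two write-ups are equivalent.
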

\begin{proof}
	Consider a surjective map $ \theta :\mathcal{Z}^n(E) \rightarrow \mathcal{H}^n_{app}(E) $ defined by $$ \theta(T)= T+\overline{\mathcal{B}^n(E)}^{\tau_n}\quad ( T\in \mathcal{Z}^n(E)). $$
	The kernel of $ \theta $ contains $\mathcal{B}^n(E)  $, so there is an induced surjective map $ \tilde{\theta}:\mathcal{H}^n(E)\rightarrow  \mathcal{H}^n_{app}(E) $ defined by $\tilde{\theta}(T+\mathcal{B}^n(E))=\theta (T)  $. Clearly $\tilde{\theta}  $ is injective if and only if $\mathcal{B}^n(E)  $
	is $ \tau_n $-closed in $ \mathcal{Z}^n(E) $. Hence,  $  \mathcal{H}^n(E)=\{ 0\} $ implies that  $  \mathcal{H}^n_{app}(E)=\{ 0\}. $
\end{proof}
Note that the converse of Proposition \ref{comp} does not hold in general (see Example \ref{exa}  below).
\begin{den}\label{d1}
Let $ E=(E_n,\tau_n,\delta^n_E)_{n\in \mathbb{Z}} $	 and $ F=(F_n,\rho_n,\delta^n_F)_{n\in \mathbb{Z}} $ be  $ \tau $-cochain  and $ \rho $-cochain complexes, respectively. A \textit{$ (\tau,\rho) $-morphism} $ \varphi : E\rightarrow F $ is a morphism $ \varphi=(\varphi_n)_{n\in \mathbb{Z}} $ such that each $ \varphi_n:(E_n, \tau_n)\rightarrow (F_n, \rho_n) $ is continuous.

\end{den}
The proof of the following lemma is straightforward.
\begin{lem}\label{1}
Let $ E=(E_n,\tau_n,\delta^n_E)_{n\in \mathbb{Z}} $	 and $ F=(F_n,\rho_n,\delta^n_F)_{n\in \mathbb{Z}} $ be  $ \tau $-cochain  and $ \rho $-cochain complexes and let $ \varphi : E\rightarrow F $ be  a $ (\tau,\rho) $-morphism.
 For every $ n $, we have $\varphi_n(\mathcal{Z}^n(E))\subseteq \mathcal{Z}^n(F)  $, and
	$\varphi_n(\overline{\mathcal{B}^n(E)}^{\tau_n})\subseteq \overline{\mathcal{B}^n(F)}^{\rho_n}  $.
\end{lem}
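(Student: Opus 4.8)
The plan is to prove the two inclusions separately, each being essentially a continuity argument. For the first inclusion, let $T \in \mathcal{Z}^n(E) = \ker \delta^n_E$. Since $\varphi$ is a morphism of cochain complexes, the square relating $\delta^n_E$, $\delta^n_F$, $\varphi_n$ and $\varphi_{n+1}$ commutes, so $\delta^n_F(\varphi_n(T)) = \varphi_{n+1}(\delta^n_E(T)) = \varphi_{n+1}(0) = 0$; hence $\varphi_n(T) \in \ker \delta^n_F = \mathcal{Z}^n(F)$. This step requires no topology at all, only the algebraic commuting-square condition, and so is immediate.

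For the second inclusion I would first observe that on the dense set it is easy: $\varphi_n(\mathcal{B}^n(E)) = \varphi_n(\operatorname{ran}\delta^{n-1}_E) = \operatorname{ran}(\varphi_n \circ \delta^{n-1}_E) = \operatorname{ran}(\delta^{n-1}_F \circ \varphi_{n-1}) \subseteq \operatorname{ran}\delta^{n-1}_F = \mathcal{B}^n(F)$, again using the commuting square (at level $n-1$) and nothing else. Then I would pass to closures using the $(\tau_n,\rho_n)$-continuity of $\varphi_n$: a continuous map sends the closure of a set into the closure of its image, i.e. $\varphi_n(\overline{S}^{\tau_n}) \subseteq \overline{\varphi_n(S)}^{\rho_n}$ for any $S \subseteq E_n$. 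Applying this with $S = \mathcal{B}^n(E)$ and combining with the previous line gives $\varphi_n(\overline{\mathcal{B}^n(E)}^{\tau_n}) \subseteq \overline{\varphi_n(\mathcal{B}^n(E))}^{\rho_n} \subseteq \overline{\mathcal{B}^n(F)}^{\rho_n}$, as desired.

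The only point that needs a word of care — and it is the closest thing to an obstacle, though a minor one — is the claim that a continuous map $\varphi_n:(E_n,\tau_n)\to(F_n,\rho_n)$ satisfies $\varphi_n(\overline{S}^{\tau_n})\subseteq\overline{\varphi_n(S)}^{\rho_n}$. This is the standard topological fact that continuity is equivalent to $f(\bar{A})\subseteq\overline{f(A)}$; here $\tau_n$ and $\rho_n$ are arbitrary (not necessarily metrizable) topologies, so one should argue with nets or neighbourhood bases rather than sequences, but the argument is routine: if $x\in\overline{S}^{\tau_n}$ and $V$ is a $\rho_n$-neighbourhood of $\varphi_n(x)$, then $\varphi_n^{-1}(V)$ is a $\tau_n$-neighbourhood of $x$, hence meets $S$, so $V$ meets $\varphi_n(S)$. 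Since the lemma is labelled "straightforward," I would present these three observations compactly without belabouring the net argument.
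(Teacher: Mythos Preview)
Your proof is correct and is exactly the routine argument the paper has in mind; indeed, the paper does not even spell out a proof, merely noting that it ``is straightforward.'' Both inclusions follow precisely as you say: the first from the commuting square $\delta^n_F\circ\varphi_n=\varphi_{n+1}\circ\delta^n_E$, and the second from $\varphi_n(\mathcal{B}^n(E))\subseteq\mathcal{B}^n(F)$ together with the elementary fact that a continuous map sends closures into closures.
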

 \begin{cor}\label{bar}
 	Let $ E $, $ F$ and $ \varphi$ be as in Definition \ref{d1}. Then, $ \varphi $ induces a sequence $ \overline{\varphi}=(\overline{\varphi_n})_{n\in \mathbb{Z}} $ of linear maps
 	$$\overline{\varphi_n}: \mathcal{H}^n_{app}(E)\rightarrow \mathcal{H}^n_{app}(F). $$
  	\end{cor}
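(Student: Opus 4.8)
The plan is to let $\overline{\varphi_n}$ be the map induced by $\varphi_n$ on the quotient spaces and to verify that it is well defined and linear; both properties follow at once from Lemma \ref{1}, so the corollary is essentially a formal consequence of that lemma.

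First I would fix $n\in\mathbb{Z}$ and recall the definitions $\mathcal{H}^n_{app}(E)=\mathcal{Z}^n(E)/\overline{\mathcal{B}^n(E)}^{\tau_n}$ and $\mathcal{H}^n_{app}(F)=\mathcal{Z}^n(F)/\overline{\mathcal{B}^n(F)}^{\rho_n}$. Write $q_F:\mathcal{Z}^n(F)\to\mathcal{H}^n_{app}(F)$ for the quotient map. By the first inclusion in Lemma \ref{1}, $\varphi_n\bigl(\mathcal{Z}^n(E)\bigr)\subseteq\mathcal{Z}^n(F)$, so the restriction $\varphi_n|_{\mathcal{Z}^n(E)}$ followed by $q_F$ gives a linear map $\mathcal{Z}^n(E)\to\mathcal{H}^n_{app}(F)$.

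Next I would check that this composite annihilates $\overline{\mathcal{B}^n(E)}^{\tau_n}$. By the second inclusion in Lemma \ref{1}, $\varphi_n\bigl(\overline{\mathcal{B}^n(E)}^{\tau_n}\bigr)\subseteq\overline{\mathcal{B}^n(F)}^{\rho_n}=\ker q_F$. Hence, by the universal property of the quotient, there is a unique linear map $\overline{\varphi_n}:\mathcal{H}^n_{app}(E)\to\mathcal{H}^n_{app}(F)$ satisfying $\overline{\varphi_n}\bigl(T+\overline{\mathcal{B}^n(E)}^{\tau_n}\bigr)=\varphi_n(T)+\overline{\mathcal{B}^n(F)}^{\rho_n}$ for every $T\in\mathcal{Z}^n(E)$; equivalently, well-definedness is the statement that $T-T'\in\overline{\mathcal{B}^n(E)}^{\tau_n}$ forces $\varphi_n(T)-\varphi_n(T')\in\overline{\mathcal{B}^n(F)}^{\rho_n}$, which is exactly the inclusion just cited. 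Linearity of $\overline{\varphi_n}$ is inherited from that of $\varphi_n$ and of the quotient maps. Taking the family $\overline{\varphi}=(\overline{\varphi_n})_{n\in\mathbb{Z}}$ then yields the assertion.

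There is no real obstacle here: all the topological content — that cocycles go to cocycles and that the $\tau_n$-closure of the coboundaries maps into the $\rho_n$-closure of the coboundaries — has been isolated in Lemma \ref{1}, and what remains is the routine passage to quotients. I would only remark in passing that the statement asks merely for linear maps, so continuity of $\overline{\varphi_n}$ (with respect to the quotient topologies) is not needed, although it would follow from continuity of $\varphi_n$ should one want it later.
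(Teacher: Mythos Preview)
Your proof is correct and follows essentially the same approach as the paper: invoke the two inclusions of Lemma \ref{1} to show that $\varphi_n$ descends to a well-defined linear map on the quotients, with the explicit formula $\overline{\varphi_n}(T+\overline{\mathcal{B}^n(E)}^{\tau_n})=\varphi_n(T)+\overline{\mathcal{B}^n(F)}^{\rho_n}$. The paper's version is simply terser, omitting the explicit verification of well-definedness that you spell out.
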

\begin{proof}
	By Lemma \ref{1}, $\varphi_n(\mathcal{Z}^n(E))\subseteq \mathcal{Z}^n(F)  $ and
	$\varphi_n(\overline{\mathcal{B}^n(E)}^{\tau_n})\subseteq \overline{\mathcal{B}^n(F)}^{\rho_n}  $. Thus we can define a map $ \overline{\varphi_n}: \mathcal{H}^n_{app}(E)\rightarrow \mathcal{H}^n_{app}(F) $  by $\overline{\varphi_n}(T+\overline{\mathcal{B}^n(E)}^{\tau_n}  )= \varphi_n(T)+ \overline{\mathcal{B}^n(F)}^{\rho_n} \quad (T\in \mathcal{Z}^n(E))$,
	as required.
\end{proof}
\begin{den}
Let $ E=(E_n,\tau_n,\delta^n_E)_{n\in \mathbb{Z}} $	 and $ F=(F_n,\rho_n,\delta^n_F)_{n\in \mathbb{Z}} $ be  $ \tau $-cochain  and $ \rho $-cochain complexes, respectively. The $ (\tau,\rho)$-morphisms $ \varphi= (\varphi_n)_{n\in \mathbb{Z}}  $ and $ \psi =  (\psi_n)_{n\in \mathbb{Z}}  $	from $ E $ into $ F $
	are called \textit{approximately homotopic}, if  for every $n\in \mathbb{Z}$ there exists a family of bounded linear maps $  (\xi^n_\alpha)_{\alpha} $ from $ E_{n+1}$ into $F_n$
	such that for every $ T\in E_n $,
	  we have
	$$(\varphi_n - \psi_n)(T)=\rho_n-\lim_\alpha \,(\delta_F^{n-1}\circ \xi_\alpha^{n-1}+\xi^n_\alpha\circ \delta^n_E)(T),  $$
	whenever the limit exists.
	The family $ (\xi^n_\alpha)_{\alpha, n} $ is called an approximate homotopy of $ \varphi $ and $ \psi $.
\end{den}
\begin{pro}
Let $ E=(E_n,\tau_n,\delta^n_E)_{n\in \mathbb{Z}} $	 and $ F=(F_n,\rho_n,\delta^n_F)_{n\in \mathbb{Z}} $ be  $ \tau $-cochain  and $ \rho $-cochain complexes, respectively. Let $ \varphi, \psi :E \rightarrow F $
	be two approximately homotopic $(\tau,\rho)$-morphisms. Then for every $n\in \mathbb{Z}$, we have $\overline{\varphi_n}=\overline{\psi_n}$, where $ \overline{\varphi_n},\overline{\psi_n}:\mathcal{H}^n_{app}(E)\rightarrow \mathcal{H}^n_{app}(F) $ are the induced maps as in Corollary \ref{bar}.
\end{pro}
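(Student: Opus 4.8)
The plan is to verify that $\overline{\varphi_n}$ and $\overline{\psi_n}$ coincide on the class of an arbitrary cocycle, since these classes exhaust $\mathcal{H}^n_{app}(E)=\mathcal{Z}^n(E)/\overline{\mathcal{B}^n(E)}^{\tau_n}$. By (the proof of) Corollary \ref{bar}, for $T\in\mathcal{Z}^n(E)$ the induced maps act by $\overline{\varphi_n}\big(T+\overline{\mathcal{B}^n(E)}^{\tau_n}\big)=\varphi_n(T)+\overline{\mathcal{B}^n(F)}^{\rho_n}$ and likewise for $\psi$. Hence it is enough to prove that $\varphi_n(T)-\psi_n(T)\in\overline{\mathcal{B}^n(F)}^{\rho_n}$ for every cocycle $T\in\mathcal{Z}^n(E)$.

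So I would fix $T\in\mathcal{Z}^n(E)=\ker\delta^n_E$ and substitute it into the approximate homotopy identity. Since $\delta^n_E(T)=0$, we have $\xi^n_\alpha(\delta^n_E(T))=0$ for every $\alpha$, so the identity collapses to $(\varphi_n-\psi_n)(T)=\rho_n-\lim_\alpha\delta_F^{n-1}(\xi_\alpha^{n-1}(T))$. Each $\xi_\alpha^{n-1}(T)$ lies in $F_{n-1}$, so $\delta_F^{n-1}(\xi_\alpha^{n-1}(T))\in\mathrm{ran}\,\delta_F^{n-1}=\mathcal{B}^n(F)\subseteq\overline{\mathcal{B}^n(F)}^{\rho_n}$. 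Since $\overline{\mathcal{B}^n(F)}^{\rho_n}$ is $\rho_n$-closed and contains every term of the net, it contains the $\rho_n$-limit of the net, which is exactly $(\varphi_n-\psi_n)(T)$. Passing to quotients, $\overline{\varphi_n}$ and $\overline{\psi_n}$ agree on the class of $T$, and as $T$ was an arbitrary cocycle, $\overline{\varphi_n}=\overline{\psi_n}$.

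The single point that deserves care is the clause ``whenever the limit exists'' in the definition of approximate homotopy: the argument above needs the net $\big(\delta_F^{n-1}(\xi_\alpha^{n-1}(T))\big)_\alpha$ to be $\rho_n$-convergent when $T$ is a cocycle. I would read that definition — in analogy with the net $\lim_\nu(a\cdot x_\nu-x_\nu\cdot a)$ defining approximate innerness, which is required to exist for every $a$ — as asserting precisely that this net converges, with limit $(\varphi_n-\psi_n)(T)$. Under that reading there is no further obstacle: the vanishing of the $\xi^n_\alpha\circ\delta^n_E$ term on cocycles, the inclusion of the images of $\delta_F^{n-1}$ in $\mathcal{B}^n(F)$, and the $\rho_n$-closedness of $\overline{\mathcal{B}^n(F)}^{\rho_n}$ are all immediate, so the proof is essentially a one-line computation once the reduction to cocycles is made.
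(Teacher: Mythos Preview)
Your proof is correct and follows essentially the same route as the paper's: restrict to a cocycle $T\in\mathcal{Z}^n(E)$, use $\delta^n_E(T)=0$ to drop the second summand in the homotopy identity, and conclude that $(\varphi_n-\psi_n)(T)$ is a $\rho_n$-limit of elements of $\mathcal{B}^n(F)$, hence lies in $\overline{\mathcal{B}^n(F)}^{\rho_n}$. Your added remark about the clause ``whenever the limit exists'' is a fair observation on the definition, but the argument itself matches the paper's proof line for line.
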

\begin{proof}
	 By assumption,
there exists a family of bounded linear maps $  (\xi^n_\alpha)_{\alpha, n} $ such that
	$$(\varphi_n - \psi_n)(T)=\rho_n-\lim_\alpha \,(\delta_F^{n-1}\circ \xi_\alpha^{n-1}+\xi^n_\alpha\circ \delta^n_E)(T), $$
where $ T\in \mathcal{Z}^n(E)$. Thus $\varphi_n(T) - \psi_n(T) = \rho_n-\lim_\alpha \,\delta_F^{n-1}\circ \xi_\alpha^{n-1}(T) $,
	so $\varphi_n(T) - \psi_n(T)\in \overline{\mathcal{B}^n(F)}^{\rho_n}.  $ Hence
	$\overline{\varphi_n}(T)=\varphi_n(T) + \overline{\mathcal{B}^n(F)}^{\rho_n}=
	\psi_n(T) + \overline{\mathcal{B}^n(F)}^{\rho_n}=\overline{\psi_n}(T).$
\end{proof}
\begin{den}
	Suppose that  $ E=(E_n,\tau_n,\delta^n_E)_{n\in \mathbb{Z}} $	 and $ F=(F_n,\rho_n,\delta^n_F)_{n\in \mathbb{Z}} $ are  $ \tau $-cochain  and $ \rho $-cochain complexes, respectively.
Then $ E $ and $F$	 are called \textit{approximate homotopically equivalent}, if there are two $ (\tau,\rho)$-morphism  $ \varphi:E \rightarrow F $ and $ (\rho,\tau)$-morphism $ \psi: F \rightarrow E $ such that $ \varphi\circ\psi $ and $ \psi\circ\varphi $
	are approximately homotopic to the identity morphism on $ F $ and $ E $, respectively, that is,
 there exist two family of bounded linear maps
	$  (\xi^n_\alpha)_{\alpha, n} $ and $  (\eta^n_\beta)_{\beta, n}$
	such that for every $n\in \mathbb{Z}  $  we have
	$\xi^n_\alpha: E_{n+1}  \rightarrow E_n $ and $ \eta^n_\beta: F_{n+1}  \rightarrow F_n $ satisfying
			\begin{eqnarray}\label{111}
		(\psi_n\circ \varphi_n )(T)-T=\tau_n-\lim_\alpha \,(\delta^{n-1}_E\circ \xi_\alpha^{n-1}+\xi^n_\alpha\circ \delta^n_E)(T), \quad (T\in E_n )
	\end{eqnarray}
	and
	\begin{eqnarray}\label{222}
		(\varphi_n\circ \psi_n )(S)-S=\rho_n-\lim_\beta \,(\delta^{n-1}_F\circ \eta_\beta^{n-1}+\eta^n_\beta\circ \delta^{n}_F)(S), \quad (S\in F_n ).
	\end{eqnarray}
		\end{den}
\begin{thm}
Let $ E=(E_n,\tau_n,\delta^n_E)_{n\in \mathbb{Z}} $	 and $ F=(F_n,\rho_n,\delta^n_F)_{n\in \mathbb{Z}} $ be  approximate homotopically equivalent. Then for every $n\in \mathbb{Z}  $,
	$$ \mathcal{H}^n_{app}(E)\cong \mathcal{H}^n_{app}(F). $$
\end{thm}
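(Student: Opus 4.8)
The plan is to prove that the linear maps $\overline{\varphi_n}$ and $\overline{\psi_n}$ on approximate cohomology produced by Corollary \ref{bar} are mutually inverse. Concretely, by hypothesis we are given a $(\tau,\rho)$-morphism $\varphi\colon E\to F$ and a $(\rho,\tau)$-morphism $\psi\colon F\to E$ with $\psi\circ\varphi$ approximately homotopic to $\mathrm{id}_E$ and $\varphi\circ\psi$ approximately homotopic to $\mathrm{id}_F$. Corollary \ref{bar} applied to $\varphi$ and to $\psi$ yields linear maps $\overline{\varphi_n}\colon\mathcal{H}^n_{app}(E)\to\mathcal{H}^n_{app}(F)$ and $\overline{\psi_n}\colon\mathcal{H}^n_{app}(F)\to\mathcal{H}^n_{app}(E)$ for every $n\in\mathbb{Z}$, and it suffices to show $\overline{\psi_n}\circ\overline{\varphi_n}=\mathrm{id}_{\mathcal{H}^n_{app}(E)}$ and $\overline{\varphi_n}\circ\overline{\psi_n}=\mathrm{id}_{\mathcal{H}^n_{app}(F)}$.

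The first step is a functoriality observation. The composite $\psi\circ\varphi=(\psi_n\circ\varphi_n)_{n\in\mathbb{Z}}$ is again a morphism of cochain complexes, since each factor commutes with the differentials, and each $\psi_n\circ\varphi_n\colon(E_n,\tau_n)\to(E_n,\tau_n)$ is continuous as a composition of continuous maps; hence $\psi\circ\varphi$ is a genuine $(\tau,\tau)$-morphism $E\to E$, and Corollary \ref{bar} applies to it. Reading off the explicit formula $\overline{\varphi_n}\bigl(T+\overline{\mathcal{B}^n(E)}^{\tau_n}\bigr)=\varphi_n(T)+\overline{\mathcal{B}^n(F)}^{\rho_n}$ from the proof of Corollary \ref{bar}, one gets immediately that $\overline{(\psi\circ\varphi)_n}=\overline{\psi_n}\circ\overline{\varphi_n}$ and, symmetrically, that $\psi\circ\varphi$, resp.\ $\varphi\circ\psi$, being composites of the given morphisms are legitimate $(\tau,\tau)$- resp.\ $(\rho,\rho)$-morphisms with $\overline{(\varphi\circ\psi)_n}=\overline{\varphi_n}\circ\overline{\psi_n}$. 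Also the identity morphism $\mathrm{id}_E$ evidently induces the identity map on $\mathcal{H}^n_{app}(E)$, and likewise for $F$.

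Now I would invoke the Proposition above stating that approximately homotopic $(\tau,\rho)$-morphisms induce the same map on approximate cohomology. Applying it to the pair $\psi\circ\varphi$ and $\mathrm{id}_E$ of $(\tau,\tau)$-morphisms $E\to E$ — which are approximately homotopic by relation \eqref{111} — gives $\overline{(\psi\circ\varphi)_n}=\overline{(\mathrm{id}_E)_n}$, i.e.\ $\overline{\psi_n}\circ\overline{\varphi_n}=\mathrm{id}_{\mathcal{H}^n_{app}(E)}$; applying it to $\varphi\circ\psi$ and $\mathrm{id}_F$, approximately homotopic by \eqref{222}, gives $\overline{\varphi_n}\circ\overline{\psi_n}=\mathrm{id}_{\mathcal{H}^n_{app}(F)}$. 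Therefore each $\overline{\varphi_n}$ is a linear bijection with two-sided inverse $\overline{\psi_n}$, whence $\mathcal{H}^n_{app}(E)\cong\mathcal{H}^n_{app}(F)$ for all $n$. The argument is essentially formal once Corollary \ref{bar} and the homotopy-invariance Proposition are in hand; the only points that need a moment's care are checking that the composites $\psi\circ\varphi$ and $\varphi\circ\psi$ are admissible morphisms between the relevant $\tau$-cochain complexes (so that both earlier results genuinely apply) and recording the functoriality identity $\overline{(\psi\circ\varphi)_n}=\overline{\psi_n}\circ\overline{\varphi_n}$, and I do not expect either to present a real obstacle.
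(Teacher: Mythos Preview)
Your proof is correct and is essentially the same argument as the paper's, just packaged more modularly: the paper shows directly that $\overline{\varphi_n}$ is injective and surjective by unwinding \eqref{111} and \eqref{222} on cocycles, whereas you factor the same computation through the earlier homotopy-invariance Proposition together with the functoriality identity $\overline{(\psi\circ\varphi)_n}=\overline{\psi_n}\circ\overline{\varphi_n}$ to exhibit $\overline{\psi_n}$ as a two-sided inverse. The mathematical content is identical.
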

\begin{proof}
	Since $ E $ and $F  $ are approximate homotopically equivalent, there exist
	$ (\tau,\rho)$-morphism  $ \varphi:E \rightarrow F $ and $ (\rho,\tau)$-morphism $ \psi: F \rightarrow E $ and two
	 family of continuous linear maps
	$  (\xi^n_\alpha)_{\alpha, n} $ and $  (\eta^n_\beta)_{\beta, n}$
	which satisfy in the equations (\ref{111}) and (\ref{222}).

Let $n\in \mathbb{Z}  $	be fixed. Consider the induced map $ \overline{\varphi_n}:\mathcal{H}^n_{app}(E)\rightarrow \mathcal{H}^n_{app}(F) $
which is	defined  in  Corollary \ref{bar}. We show that $ \overline{\varphi_n} $ is an isomorphism.
	Let $ T\in \mathcal{Z}^n(E) $ be such that $  \overline{\varphi_n}(T+\overline{\mathcal{B}^n(E)}^{\tau_n})=0 $. Then $\varphi_n(T) \in \overline{\mathcal{B}^n(F)}^{\rho_n}  $, so that $\psi_n \circ \varphi_n(T) \in \overline{\mathcal{B}^n(E)}^{\tau_n} $. Now by (\ref{111}) we have
 $$\psi_n \circ \varphi_n(T)-T= \tau_n-\lim_\alpha \,\delta^{n-1}_E\circ \xi_\alpha^{n-1}(T) \in \overline{\mathcal{B}^n(E)}^{\tau_n}. $$
	Thus $ T\in \overline{\mathcal{B}^n(E)}^{\tau_n}. $ This means that $\overline{\varphi_n}  $ is one-to-one.
	
	Now, we show that $ \overline{\varphi_n} $
	is surjective. Let $ S\in \mathcal{Z}^n(F) $ be such that $S+ \overline{\mathcal{B}^n(F)}^{\rho_n}\in \mathcal{H}^n_{app}(F)$. By Lemma \ref{1},
	$ \psi_n(S)\in \mathcal{Z}^n(E) $, so $\psi_n(S)+\overline{\mathcal{B}^n(E)}^{\tau_n}\in \mathcal{H}^n_{app}(E).  $
	We show that
	\begin{eqnarray}\label{00}
		\overline{\varphi_n}(\psi_n(S)+\overline{\mathcal{B}^n(E)}^{\tau_n}) = S+\overline{\mathcal{B}^n(F)}^{\rho_n}.
	\end{eqnarray}
Using (\ref{222}) we obtain $$ \varphi_n\circ\psi_n(S)-S= \rho_n-\lim_\beta \,\delta^{n-1}_F\circ \eta_\beta^{n-1}(S) \in \overline{\mathcal{B}^n(F)}^{\rho_n}. $$
	  Hence $	 \varphi_n\circ\psi_n(S)+\overline{\mathcal{B}^n(F)}^{\rho_n} = S+\overline{\mathcal{B}^n(F)}^{\rho_n}  $ which shows that the equation (\ref{00}) holds.
	  Thus $ \overline{\varphi_n} $ is an isomorphism.
\end{proof}
\section{Approximate Hochschild cohomology}
	Let $ \mathcal{A} $ be a Banach algebra and let $ X $ be  a Banach $ \mathcal{A} $-bimodule.
In this section, we study the approximate cohomology in the special case when $ E=(E_n,\tau_n,\delta^n)_{n\geq 0} $ is the Hochschild cochain complex and $ \tau_n $ is the strong topology on $ E_n = \mathcal{BL}^n(\mathcal{A},X)$.
By definition, a net $ (T_i)_i \subseteq E_n $ converges strongly to $ T\in E_n $ if
 $ \Arrowvert (T_i-T)(a_1,..., a_{n})\Arrowvert \rightarrow 0 \quad ( a_1,..., a_{n} \in \mathcal{A})$.

Note that for every  $n\in \mathbb{N}$, the map $\delta^n: \mathcal{BL}^n(\mathcal{A},X)\rightarrow \mathcal{BL}^{n+1}(\mathcal{A},X) $ is  strongly continuous. Throughout this section, for simplicity, we  denote the Hochschild cochain complex by  $ E=(E_n,\delta^n)_{n\geq 0} $ and all the limits being taken in the strong topology.
\begin{den} Let $ \mathcal{A} $ be a Banach algebra and let $ X $ be  a Banach $ \mathcal{A} $-bimodule. The space $\ker \delta^n $ is denoted by
$ \mathcal{Z}^n(\mathcal{A},X)$. The elements of $\mathcal{Z}^n(\mathcal{A},X) $ are called the $n$-cocycles and the elements of $\hbox{ran}\, \delta^{n-1 }=\mathcal{B}^n(\mathcal{A},X)$ are called the $n$-coboundaries. Then for $n\geq 1$ we define
		$$\mathcal{H}^n_{app}(E)= \mathcal{H}^n_{app}(\mathcal{A},X) :=\mathcal{Z}^n(\mathcal{A},X)/\overline{\mathcal{B}^n(\mathcal{A},X)}^{\text{ strong}}, $$
	is called the $ n$-th \textit{approximate Hochschild cohomology} of the complex $ E $.
\end{den}
\begin{example}
	Let $ \mathcal{A} $ be a Banach algebra and let $ X $ be  a Banach $ \mathcal{A} $-bimodule.	By definition of approximate amenability, $  \mathcal{A} $ is approximately amenable if and only if every continuous derivation $  D:\mathcal{A}\rightarrow X^* $ is a strong-limit of inner derivations  which means that $ \mathcal{H}^1_{app}(\mathcal{A} ,X^*)=\{ 0\}. $
	\end{example}
\begin{example}	\label{exa}
Gahramani and Loy \cite{GL} constructed an approximate amenable Banach algebra which is not amenable. This example shows that the notions of approximate Hochschild cohomology and  Hochschild cohomology are distinct, at least for  $ n=1 $.
	\end{example}

\begin{rem}\label{triv}
	Let $ \mathcal{A} $ be a Banach algebra with a bounded approximate identity and let $ X $ be  a Banach $ \mathcal{A} $-bimodule. If $ \mathcal{A} $ acts trivially from one side, then by \cite[Proposition 1.5]{J}, for every $ n $, $ \mathcal{H}^n(\mathcal{A},X^*)=\{ 0\} $. So by Proposition \ref{comp}, $ \mathcal{H}^n_{app}(\mathcal{A},X^*)=\{ 0\} $.
	\end{rem}
\begin{thm}\label{long}
	Let $ \mathcal{A} $ be a Banach algebra and let $ X $ be  a Banach $ \mathcal{A} $-bimodule. Let $ Y $ be a closed submodule of $ X $ which is complemented as a Banach space. Then, there are maps making the following sequence exact.
	$$\cdot \cdot \cdot  \rightarrow \mathcal{H}^n_{app}(\mathcal{A},Y) \rightarrow \mathcal{H}^n_{app}(\mathcal{A},X)\rightarrow \mathcal{H}^n_{app}(\mathcal{A},X/Y)\rightarrow \mathcal{H}^{n+1}_{app}(\mathcal{A},Y) \rightarrow  \cdot \cdot \cdot$$
\end{thm}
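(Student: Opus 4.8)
The plan is to mimic the classical construction of the long exact cohomology sequence from a short exact sequence of cochain complexes, but carried out entirely in the category of $\tau$-cochain complexes with the strong topology, using only the tools already developed in Section~2. Since $Y$ is complemented in $X$ as a Banach space, there is a bounded linear projection $P:X\to Y$; write $X\cong Y\oplus (X/Y)$ as Banach spaces (not as modules). For each $n\geq 0$ this induces a short exact sequence of Banach spaces
$$0\longrightarrow \mathcal{BL}^n(\mathcal{A},Y)\stackrel{\iota_n}{\longrightarrow}\mathcal{BL}^n(\mathcal{A},X)\stackrel{q_n}{\longrightarrow}\mathcal{BL}^n(\mathcal{A},X/Y)\longrightarrow 0,$$
where $\iota_n$ is composition with the inclusion $Y\hookrightarrow X$ and $q_n$ is composition with the quotient map $X\to X/Y$; the section $\mathcal{BL}^n(\mathcal{A},X/Y)\to\mathcal{BL}^n(\mathcal{A},X)$ is composition with a fixed bounded linear lift $X/Y\to X$. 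All three families commute with the respective Hochschild differentials $\delta^n$ (because inclusion and quotient are module maps), so $\iota=(\iota_n)$ and $q=(q_n)$ are morphisms of Hochschild cochain complexes, and each $\iota_n,q_n$ is strongly continuous, hence they are $(\text{strong},\text{strong})$-morphisms in the sense of Definition~\ref{d1}.

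Next I would define the connecting map $\partial^n:\mathcal{H}^n_{app}(\mathcal{A},X/Y)\to\mathcal{H}^{n+1}_{app}(\mathcal{A},Y)$ by the usual snake-lemma recipe: given a strong cocycle $S\in\mathcal{Z}^n(\mathcal{A},X/Y)$, lift it to $\widetilde S\in\mathcal{BL}^n(\mathcal{A},X)$ via the fixed bounded lift, form $\delta^n\widetilde S\in\mathcal{BL}^{n+1}(\mathcal{A},X)$, observe that $q_{n+1}(\delta^n\widetilde S)=\delta^n S=0$, so $\delta^n\widetilde S=\iota_{n+1}(R)$ for a unique $R\in\mathcal{BL}^{n+1}(\mathcal{A},Y)$, and set $\partial^n(S+\overline{\mathcal{B}^n}^{\,\mathrm{strong}}):=R+\overline{\mathcal{B}^{n+1}(\mathcal{A},Y)}^{\,\mathrm{strong}}$. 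One checks $R\in\mathcal{Z}^{n+1}(\mathcal{A},Y)$ since $\iota_{n+2}(\delta^{n+1}R)=\delta^{n+1}\delta^n\widetilde S=0$ and $\iota_{n+2}$ is injective. The well-definedness of $\partial^n$ is the one place where the strong closures genuinely enter: if $S$ changes by an element of $\overline{\mathcal{B}^n(\mathcal{A},X/Y)}^{\,\mathrm{strong}}$, say $S=\mathrm{strong}\text{-}\lim_\nu\delta^{n-1}U_\nu$, I need the corresponding change in $R$ to lie in $\overline{\mathcal{B}^{n+1}(\mathcal{A},Y)}^{\,\mathrm{strong}}$; this follows because $q_{n-1}$ is a strongly continuous surjection with a bounded linear section, so each $U_\nu$ lifts to $\widetilde U_\nu\in\mathcal{BL}^{n-1}(\mathcal{A},X)$ with $\delta^{n-1}\widetilde U_\nu\to\widetilde S$ strongly (after adjusting by the section), and applying the strongly continuous map $\iota_{n+1}^{-1}\circ\delta^n$ on the relevant subspace transports the strong limit. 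A similar argument, together with strong continuity of $\iota_n$ and $q_n$, gives well-definedness independent of the choice of lift $\widetilde S$.

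Having all maps in place, exactness at each of the three spots is checked by the standard diagram chase of the snake lemma, with the sole modification that every instance of "lies in the coboundaries" is replaced by "lies in the strong closure of the coboundaries," and every required equality of limits is justified by strong continuity of $\iota_n$, $q_n$, $\delta^n$ and of the fixed bounded linear section. Concretely: exactness at $\mathcal{H}^n_{app}(\mathcal{A},X)$ uses that $q_n\iota_n=0$ together with the fact that if $q_n(T)\in\overline{\mathcal{B}^n(\mathcal{A},X/Y)}^{\,\mathrm{strong}}$ then subtracting a strong limit of coboundaries and a lift reduces $T$ to the image of $\iota_n$; exactness at $\mathcal{H}^n_{app}(\mathcal{A},X/Y)$ uses $\partial^n q_n$-chase; exactness at $\mathcal{H}^{n+1}_{app}(\mathcal{A},Y)$ uses $\iota_{n+1}\partial^n$-chase. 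I expect the main obstacle to be precisely the interaction between the snake-lemma bookkeeping and the strong closures in the denominators: in the non-approximate setting one only tracks membership in $\mathcal{B}^\bullet$, whereas here one must repeatedly pass strong limits through the maps $\iota,q,\delta$ and through the bounded linear section of $q$, and verify that the approximating nets can be chosen consistently — this is where the hypothesis that $Y$ is \emph{complemented as a Banach space} (giving bounded linear, strongly continuous sections at every level $n$) is essential, and without it the connecting map need not be well defined.
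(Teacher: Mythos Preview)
Your approach is essentially the paper's: build the short exact sequence of Hochschild complexes using the bounded section $p:X/Y\to X$, define the connecting map via that lift, note that all maps involved are strongly continuous, and run the snake-lemma chase with strong closures in place of coboundaries. The paper's one refinement is to write the connecting map as the closed formula $j(T)=\delta^n_X\,pT-p\,\delta^n_{X/Y}T$ on \emph{all} of $\mathcal{BL}^n(\mathcal{A},X/Y)$, not just on cocycles; this $j$ is visibly strongly continuous and satisfies $j\circ\delta^{n-1}_{X/Y}=\delta^n_Y\circ(-j)$, so $j\bigl(\overline{\mathcal{B}^n}^{\,\mathrm{strong}}\bigr)\subseteq\overline{\mathcal{B}^{n+1}}^{\,\mathrm{strong}}$ is immediate.

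This matters because the specific claim in your well-definedness argument---that $\delta^{n-1}\widetilde U_\nu\to\widetilde S$ strongly ``after adjusting by the section''---does not hold as stated. From $\delta^{n-1}U_\nu\to S$ you get $p\,\delta^{n-1}U_\nu\to pS=\widetilde S$, but $\delta^{n-1}(pU_\nu)-p\,\delta^{n-1}U_\nu$ is precisely $j(U_\nu)$ (one degree lower), which has no reason to converge, and no adjustment of the lifts $\widetilde U_\nu$ by elements of $\mathcal{BL}^{n-1}(\mathcal{A},Y)$ repairs this. What you actually need is the direct computation
\[
\delta^n_X\widetilde S \;=\; j(S)\;=\;\mathrm{strong}\text{-}\lim_\nu j(\delta^{n-1}U_\nu)\;=\;\mathrm{strong}\text{-}\lim_\nu \delta^n_Y\bigl(-j(U_\nu)\bigr)\in\overline{\mathcal{B}^{n+1}(\mathcal{A},Y)}^{\,\mathrm{strong}},
\]
which uses only strong continuity of $j$ and the chain identity above; this is exactly how the paper's explicit formula earns its keep. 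Once you replace that step, your outline and the paper's proof coincide.
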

\begin{proof}
We denote by $ \iota $, the injection map $ Y \hookrightarrow X$ and  by $ q $, the usual quotient map $ X \twoheadrightarrow X/Y. $
Since $Y$ is complemented, $q$ is an isomorphism, so it has a bounded right inverse $ p:X/Y \rightarrow X. $

We define a map $ \iota_0:\mathcal{BL}^{n}(\mathcal{A},Y)\rightarrow \mathcal{BL}^{n}(\mathcal{A},X) $  by $ \iota_0(T)=\iota\circ T $ and a map $ \iota_1:\mathcal{BL}^{n+1}(\mathcal{A},Y)\rightarrow \mathcal{BL}^{n+1}(\mathcal{A},X) $  by $ \iota_1(T)=\iota\circ T $. Also $ q_0:\mathcal{BL}^{n}(\mathcal{A},X)\rightarrow \mathcal{BL}^{n}(\mathcal{A},X/Y) $ is defined  by $ q_0(T)=q\circ T $ and $ j:\mathcal{BL}^{n}(\mathcal{A},X/Y)\rightarrow \mathcal{BL}^{n+1}(\mathcal{A},Y) $ is defined by $ j(T)=\delta^n_XpT-p\delta^n_{X/Y}T $, where $ \delta^n_X:\mathcal{BL}^{n}(\mathcal{A},X)\rightarrow \mathcal{BL}^{n+1}(\mathcal{A},X) $ and $\delta^n_{X/Y}:\mathcal{BL}^{n}(\mathcal{A},X/Y)\rightarrow \mathcal{BL}^{n+1}(\mathcal{A},X/Y)  $ are the corresponding coboundary operators.
Note that since $$ qj(T)=q\delta^n_XpT-qp\delta^n_{X/Y}T=\delta^n_{X/Y}qpT-\delta^n_{X/Y}T=0, $$
we have $ jT\in \mathcal{BL}^{n+1}(\mathcal{A},Y) $ for all $T\in\mathcal{BL}^{n}(\mathcal{A},X/Y)$.

If $ T\in \mathcal{Z}^{n}(\mathcal{A},Y) $, then $ \iota_0(T)\in \mathcal{Z}^{n}(\mathcal{A},X) $, thus $\iota_0  $ maps $ \mathcal{Z}^{n}(\mathcal{A},Y) $ into $ \mathcal{Z}^{n}(\mathcal{A},X) $. Also, an easy verification shows that $ q_0 $ maps $ \mathcal{Z}^{n}(\mathcal{A},X) $ into $ \mathcal{Z}^{n}(\mathcal{A},X/Y) $ and $ j $ maps $ \mathcal{Z}^{n}(\mathcal{A},X/Y) $ into $ \mathcal{Z}^{n+1}(\mathcal{A},Y) $. Thus we obtain the sequence
\begin{eqnarray}\label{seq1}
		\cdot \cdot \cdot \rightarrow\mathcal{Z}^{n}(\mathcal{A},Y)\xrightarrow{\iota_0} \mathcal{Z}^{n}(\mathcal{A},X) \xrightarrow{q_0} \mathcal{Z}^{n}(\mathcal{A},X/Y) \xrightarrow{j} \mathcal{Z}^{n+1}(\mathcal{A},Y)\xrightarrow{\iota_1} \cdot \cdot \cdot .
\end{eqnarray}

If $ T\in \mathcal{Z}^{n-1}(\mathcal{A},X/Y) $, then
$$j \delta^{n-1}_{X/Y}T=\delta^n_{X}p\delta^{n-1}_{X/Y}T-p\delta^{n}_{X/Y}\delta^{n-1}_{X/Y}T=\delta^n_{X}p\delta^{n-1}_{X/Y}T=-\delta^n_{Y}(\delta^{n}_{X}pT-p\delta^{n}_{X/Y}T)=\delta^n_{Y}(-jT). $$
Thus $ j $ maps $ \mathcal{B}^{n}(\mathcal{A},X/Y) $ into $\mathcal{B}^{n+1}(\mathcal{A},Y)$.   Similar arguments show that $\iota_0  $ maps $ \mathcal{B}^{n}(\mathcal{A},Y) $ into $ \mathcal{B}^{n}(\mathcal{A},X) $ and $ q_0 $ maps $ \mathcal{B}^{n}(\mathcal{A},X) $ into $ \mathcal{B}^{n}(\mathcal{A},X/Y) $ and we have the sequence
\begin{eqnarray}\label{seq}
	 \cdot \cdot \cdot\rightarrow \mathcal{B}^{n}(\mathcal{A},Y)\xrightarrow{\iota_0} \mathcal{B}^{n}(\mathcal{A},X) \xrightarrow{q_0} \mathcal{B}^{n}(\mathcal{A},X/Y) \xrightarrow{j} \mathcal{B}^{n+1}(\mathcal{A},Y)\xrightarrow{\iota_1}  \cdot \cdot \cdot.
\end{eqnarray}
Since $ \iota_0, q_0 $ and $ j $ are strongly continuous, the sequence (\ref{seq}) extends to the sequence
$$
	 \cdot \cdot \cdot \rightarrow\overline{\mathcal{B}^{n}(\mathcal{A},Y)}^{\text{ strong}}\xrightarrow{\iota_0} \overline{\mathcal{B}^{n}(\mathcal{A},X)}^{\text{ strong}} \xrightarrow{q_0} \overline{\mathcal{B}^{n}(\mathcal{A},X/Y)}^{\text{ strong}} \xrightarrow{j} \overline{\mathcal{B}^{n+1}(\mathcal{A},Y)}^{\text{ strong}}\xrightarrow{\iota_1} \cdot \cdot \cdot,
 $$
where  the extended maps are still denoted by $ \iota_0, q_0 $ and $ j $, respectively.
These maps induce the maps $ \varphi_1, \varphi_2,\varphi_3 $ and $\varphi_4$ in the following sequence
\begin{eqnarray}\label{11}
 \cdot \cdot \cdot \rightarrow\mathcal{H}^n_{app}(\mathcal{A},Y)\xrightarrow{\varphi_1} \mathcal{H}^n_{app}(\mathcal{A},X)\xrightarrow{\varphi_2} \mathcal{H}^n_{app}(\mathcal{A},X/Y) \xrightarrow{\varphi_3} \mathcal{H}_{app}^{n+1}(\mathcal{A},Y)\xrightarrow{\varphi_4} \cdot \cdot \cdot,
\end{eqnarray}
which are defined by
	$$\varphi_1(T_1+	\overline{\mathcal{B}^{n}(\mathcal{A},Y)}^{\text{ strong}})=\iota_0T_1+\overline{\mathcal{B}^{n}(\mathcal{A},X)}^{\text{ strong}},  $$
		$$\varphi_2(T_2+	\overline{\mathcal{B}^{n}(\mathcal{A},X)}^{\text{ strong}} )=q_0T_2+\overline{\mathcal{B}^{n}(\mathcal{A},X/Y)}^{\text{ strong}},  $$
		$$\varphi_3(T_3+	\overline{\mathcal{B}^{n}(\mathcal{A},X/Y)}^{\text{ strong}} )=jT_3+\overline{\mathcal{B}^{n+1}(\mathcal{A},Y)}^{\text{ strong}},  $$
			$$\varphi_4(T_4+	\overline{\mathcal{B}^{n+1}(\mathcal{A},Y)}^{\text{ strong}})=\iota_1T_4+\overline{\mathcal{B}^{n+1}(\mathcal{A},X)}^{\text{ strong}}.  $$

We show that the sequence 	(\ref{11}) is exact. To show exactness at
	$ \mathcal{H}^n_{app}(\mathcal{A},X) $,
	we have to show that $ \text{ran}\, \varphi_1=\ker \varphi_2 $, that is,
		for $ T\in \mathcal{Z}^{n}(\mathcal{A},X)$, the following cases are equivalent
	\begin{enumerate}
		\item[(i)] $ q_0T=\lim _\alpha \delta^{n-1}_{X/Y}T_\alpha $, for a net $ (T_\alpha)\subseteq\mathcal{B}^{n-1}(\mathcal{A},X/Y). $
		\item[(ii)] There exist $ S\in\mathcal{Z}^{n}(\mathcal{A},Y) $ and $ (S_\alpha)_\alpha \subseteq \mathcal{B}^{n-1}(\mathcal{A},X) $ such that $ T=\iota_0S+\lim _\alpha \delta^{n-1}_{X}S_\alpha.  $
	\end{enumerate}	
 For each $ \alpha $, we take $ S_\alpha  =pT_\alpha $ and we put
	$ S=T-\lim _\alpha \delta^{n-1}_{X}pT_\alpha  $ which shows that (i)$ \Rightarrow $(ii). For the converse,
 for every $ \alpha $, we put $ T_\alpha = qS_\alpha\in \mathcal{BL}^{n-1}(\mathcal{A},X/Y) $. Then
	$$qT=q\iota_0S+\lim _\alpha q\delta^{n-1}_{X}S_\alpha=\lim _\alpha \delta^{n-1}_{X/Y}T_\alpha,  $$  thus the implication (ii)$ \Rightarrow $(i) holds.
	
To show exactness at
$ \mathcal{H}^n_{app}(\mathcal{A},X/Y) $,
we need to verify that $ \text{ran} \varphi_2=\ker \varphi_3 $, that is,
for every $ T\in \mathcal{Z}^{n}(\mathcal{A},X/Y)$, the following cases are equivalent
\begin{enumerate}
	\item[(i)] There exists a net $ (S_\alpha)_\alpha \subseteq \mathcal{B}^{n}(\mathcal{A},Y) $ such that
	 $ jT=\lim _\alpha \delta^{n}_{Y}S_\alpha $.
	\item[(ii)] There exist $ S\in\mathcal{Z}^{n}(\mathcal{A},Y) $ and $ (R_\alpha)_\alpha \subseteq \mathcal{B}^{n-1}(\mathcal{A},X/Y) $ such that $ T=qS+\lim _\alpha \delta^{n-1}_{X/Y}R_\alpha.  $
\end{enumerate}		
 Taking $ S=pT-\lim _\alpha \delta^{n}_{Y}S_\alpha $ and $ R_\alpha=0  $	for each $ \alpha $, we have
$$T=qpT=qS+\lim _qjT=qS,  $$ so we see that (i)$ \Rightarrow $(ii).\\
To see the case
(ii)$ \Rightarrow $(i), put $S_\alpha=pqS-S-(\delta^{n-1}_{X}p-p\delta^{n-1}_{X/Y}   )R_\alpha\in \mathcal{BL}^{n}(\mathcal{A},Y)  $. Then	
	$$\delta^{n}_{X}S_\alpha=\delta^{n}_{X}pqS+\delta^{n}_{X}p\delta^{n-1}_{X/Y} R_\alpha. $$
	
So, by (ii) we have	$\delta^{n}_{X}pqS=\delta^{n}_{X}pT- \lim _\alpha  \delta^{n}_{X}p\delta^{n-1}_{X/Y} R_\alpha.   $ This yields $ \lim _\alpha  \delta^{n}_{X} S_\alpha=\delta^{n}_{X}pT=jT. $
	
To show exactness at
$ \mathcal{H}^{n+1}_{app}(\mathcal{A},Y) $,
we need to show that $ \text{ran}\, \varphi_3=\ker \varphi_4 $, that is,
 for every $ T\in \mathcal{Z}^{n+1}(\mathcal{A},Y)$, the following cases are equivalent
\begin{enumerate}
	\item[(i)] There exists a net $ (S_\alpha)_\alpha \subseteq \mathcal{BL}^{n}(\mathcal{A},Y) $ such that
	$ \iota_1T=\lim _\alpha \delta^{n}_{Y}S_\alpha $.
	\item[(ii)] There exist $ S\in\mathcal{Z}^{n}(\mathcal{A},X/Y) $ and $ (R_\alpha)_\alpha \subseteq \mathcal{BL}^{n}(\mathcal{A},Y) $ such that $ T=jS+\lim _\alpha \delta^{n}_{Y}R_\alpha=\delta^{n}_{X}pS+ \lim _\alpha \delta^{n}_{Y}R_\alpha.  $
\end{enumerate}		
By putting $ S=0 $ and $ R_\alpha=S_\alpha, $ we have
$$ T=\iota_1 T= \lim _\alpha \delta^{n}_{Y}S_\alpha=\lim _\alpha \delta^{n}_{Y}R_\alpha.$$ Thus (i)$ \Rightarrow $(ii).\\
(ii)$ \Rightarrow $(i) follows from
$$ \lim _\alpha \delta^{n}_{Y}S_\alpha=\delta^{n}_{X}pS+\lim _\alpha \delta^{n}_{Y}R_\alpha=\iota T,$$
where $S_\alpha=pS+ R_\alpha.  $
\end{proof}
\begin{cor}
Let $ \mathcal{A} $ be a Banach algebra with a bounded approximate identity and let $ X $ be  a Banach $ \mathcal{A} $-bimodule. Then $ X_1=\{ a\cdot x\cdot b: a, b\in \mathcal{A}, x \in X\} $ is a closed neo-unital submodule of $ X $ and $ X_1^\perp $ is complemented in $ X^* $ and for every $ n $, $\mathcal{H}^{n}_{app}(\mathcal{A},X^*)= \mathcal{H}^{n}_{app}(\mathcal{A},X_1^*).  $
\end{cor}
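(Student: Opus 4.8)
The plan is to pass from $X^{*}$ to $X_{1}^{*}$ by two successive applications of Theorem \ref{long}, first removing the part of $X$ on which $\mathcal{A}$ acts trivially from the right and then the part on which it acts trivially from the left, using Remark \ref{triv} to annihilate the connecting terms of the two long exact sequences. First observe that the set $X_{1}=\{a\cdot x\cdot b : a,b\in\mathcal{A},\ x\in X\}$ coincides with the module $X_{ess}$ of Remark \ref{ess}: one inclusion is trivial, and for the other, neo-unitality of $X_{ess}$ says precisely that every element of $X_{ess}$ is of the form $a\cdot y\cdot b$ with $y\in X_{ess}\subseteq X$, hence lies in $X_{1}$. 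So by Remark \ref{ess}, $X_{1}$ is a closed neo-unital sub-$\mathcal{A}$-bimodule of $X$ and $X_{1}^{\perp}$ is complemented in $X^{*}$. I will also use the standard isometric $\mathcal{A}$-bimodule identification $Z^{*}/W^{\perp}\cong W^{*}$, valid for any closed submodule $W$ of a Banach $\mathcal{A}$-bimodule $Z$ (restriction of functionals), applied to $(Z,W)=(X^{*},X_{r}^{\perp})$ and to $(Z,W)=(X_{r}^{*},X_{1}^{\perp})$, where $X_{r}:=\overline{X\cdot\mathcal{A}}$ denotes the closed linear span of $\{x\cdot a\}$.

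For the first reduction, note that $X_{r}=\overline{X\cdot\mathcal{A}}$ is a closed sub-$\mathcal{A}$-bimodule of $X$ on whose quotient $X/X_{r}$ the algebra $\mathcal{A}$ acts trivially from the right (because $x\cdot a\in X_{r}$ for all $x,a$). By Remark \ref{triv}, $\mathcal{H}^{n}(\mathcal{A},(X/X_{r})^{*})=\{0\}$ for all $n$, hence $\mathcal{H}^{n}_{app}(\mathcal{A},X_{r}^{\perp})=\mathcal{H}^{n}_{app}(\mathcal{A},(X/X_{r})^{*})=\{0\}$ by Proposition \ref{comp}. Moreover $X_{r}^{\perp}$ is complemented in $X^{*}$: exactly as in \cite[Proposition 1.8]{J}, a bounded projection of $X^{*}$ with kernel $X_{r}^{\perp}$ arises as a weak$^{*}$ limit point of the net of operators $\varphi\mapsto e_{\lambda}\cdot\varphi$, where $(e_{\lambda})$ is the bounded approximate identity. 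Thus Theorem \ref{long} applies to $X^{*}$ with the complemented closed submodule $X_{r}^{\perp}$, and since every term $\mathcal{H}^{k}_{app}(\mathcal{A},X_{r}^{\perp})$ of the resulting long exact sequence vanishes, we obtain $\mathcal{H}^{n}_{app}(\mathcal{A},X^{*})\cong\mathcal{H}^{n}_{app}(\mathcal{A},X^{*}/X_{r}^{\perp})\cong\mathcal{H}^{n}_{app}(\mathcal{A},X_{r}^{*})$ for all $n$.

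The second reduction is formally identical. Since $\mathcal{A}$ has a bounded approximate identity, $\overline{\mathcal{A}\cdot X_{r}}=\overline{\mathcal{A}\cdot X\cdot\mathcal{A}}=X_{1}$, and $\mathcal{A}$ acts trivially from the left on $X_{r}/X_{1}$. Repeating the argument above with $X_{r}$ in place of $X$, with $X_{1}$ in place of $X_{r}$, and with the projection onto a complement of $X_{1}^{\perp}$ in $X_{r}^{*}$ now coming from weak$^{*}$ limit points of $\varphi\mapsto\varphi\cdot e_{\lambda}$, we get $\mathcal{H}^{k}_{app}(\mathcal{A},X_{1}^{\perp})=\{0\}$ for all $k$ and $\mathcal{H}^{n}_{app}(\mathcal{A},X_{r}^{*})\cong\mathcal{H}^{n}_{app}(\mathcal{A},X_{r}^{*}/X_{1}^{\perp})\cong\mathcal{H}^{n}_{app}(\mathcal{A},X_{1}^{*})$ for all $n$. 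Composing the isomorphisms from the two reductions yields $\mathcal{H}^{n}_{app}(\mathcal{A},X^{*})\cong\mathcal{H}^{n}_{app}(\mathcal{A},X_{1}^{*})$ for every $n$, which is the assertion.

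The step requiring the most care is verifying the hypothesis of Theorem \ref{long}, namely that $X_{r}^{\perp}$ is complemented in $X^{*}$ as a Banach space (and likewise $X_{1}^{\perp}$ in $X_{r}^{*}$); this is where the bounded approximate identity is genuinely used. One must also be mindful that $X/X_{r}$ and $X_{r}/X_{1}$ need not themselves be dual bimodules, so the vanishing of cohomology has to be stated for the dual modules $(X/X_{r})^{*}\cong X_{r}^{\perp}$ and $(X_{r}/X_{1})^{*}\cong X_{1}^{\perp}$ rather than for the quotients directly. The remaining points — that $X_{r}$ and $X_{1}$ are sub-bimodules, that the quotient actions are one-sided trivial, and that $\overline{\mathcal{A}\cdot X_{r}}=X_{1}$ — are immediate from the definitions together with the identity $X_{1}=\{a\cdot x\cdot b\}$.
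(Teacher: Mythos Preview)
Your proof is correct and takes essentially the same approach as the paper: a two-step reduction via Theorem~\ref{long}, using Remark~\ref{triv} to annihilate the one-sided-trivial terms in the long exact sequence. The only cosmetic difference is the order of the two steps---you strip off the right-trivial part first via $X_r=\overline{X\cdot\mathcal{A}}$, whereas the paper strips off the left-trivial part first via $X_2=\{a\cdot x:a\in\mathcal{A},\,x\in X\}$---and you are in fact more explicit than the paper in verifying the complementation hypothesis of Theorem~\ref{long} at each stage.
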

\begin{proof}
Let $ X_2=\{a\cdot x: a \in \mathcal{A},x \in X  \}. $	By \cite[Proposition 1.8]{J},
	$ X_1 $ and $ X_2 $ are closed submodule of $ X $ and $ X_1^\perp  $ is complemented in $ X^* $. Also $ X_1 $ is a closed submodule of $ X_2 $. By Theorem
\ref{long} the sequence $$ \cdot \cdot \cdot \rightarrow \mathcal{H}^{n}_{app}(\mathcal{A},X_2^\perp)\rightarrow \mathcal{H}^{n}_{app}(\mathcal{A},X^*) \rightarrow \mathcal{H}^{n}_{app}(\mathcal{A},X_2^*)\rightarrow  \cdot \cdot \cdot $$	
	is exact. Since $ \mathcal{A} $ acts trivially from left of $ X/X_2 $, by Remark \ref{triv}, $$ \mathcal{H}^{n}_{app}(\mathcal{A},X_2^\perp)=\mathcal{H}^{n}_{app}(\mathcal{A},(X/X_2)^*)=\{0\}.$$
	Similarly $ \mathcal{H}^{n+1}_{app}(\mathcal{A},X_2^\perp)=\{0\} $.
		Thus we obtain an exact sequence $$ \{0\}\rightarrow \mathcal{H}^{n}_{app}(\mathcal{A},X^*) \rightarrow \mathcal{H}^{n}_{app}(\mathcal{A},X_2^*)\rightarrow \{0\}, $$ which means that
	$ \mathcal{H}^{n}_{app}(\mathcal{A},X^*) = \mathcal{H}^{n}_{app}(\mathcal{A},X_2^*) $.
	Since $ X_1 $ is a closed submodule of $ X_2 $,
	 replacing $ X $ with $ X_2 $ and $ X_2 $ with $ X_1 $,   we conclude that $\mathcal{H}^{n}_{app}(\mathcal{A},X_2^*)=\mathcal{H}^{n}_{app}(\mathcal{A},X_1^*).$
\end{proof}
\begin{rem}
	Let $ \mathcal{A} $ be a Banach algebra and let $ X $ be  a Banach $ \mathcal{A} $-bimodule. Then, for every $ n \in \mathbb{N} $,
	$ \mathcal{BL}^n(\mathcal{A},X) $ is a Banach $ \mathcal{A} $-bimodule via the following actions,
	$$( a\cdot T)(a_1,..., a_{n})=  a\cdot T(a_1,..., a_{n}), $$ and
	\begin{align*}
		(T\cdot a)(a_1,..., a_{n})
		&:= T(aa_1,..., a_{n})\\
		&+\sum_{k=1}^{n-1}(-1)^kT(a,a_1,..., a_ka_{k+1},..., a_{n})\\
		&+(-1)^{n}T(a,a_1,..., a_{n-1})\cdot a_{n}, \quad (T\in \mathcal{BL}^n(\mathcal{A},X), a_1,..., a_{n}\in \mathcal{A}).
	\end{align*}
\end{rem}
In the following theorem we will give an analogue of reduction of dimension for approximate cohomology.
\begin{thm}\label{red}
Let $ \mathcal{A} $ be a Banach algebra and let $X $ be  a Banach $ \mathcal{A} $-bimodule. Then for every $ n\in \mathbb{N} $, $$\mathcal{H}^{n+1}_{app}(\mathcal{A},X) \cong \mathcal{H}^{1}_{app}(\mathcal{A},\mathcal{BL}^n(\mathcal{A},X)). $$
\end{thm}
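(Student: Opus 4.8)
The plan is to prove this by \emph{dimension shifting}, via the standard identification of an $(n+1)$-linear map with a linear map valued in $n$-linear maps. Put $M:=\mathcal{BL}^n(\mathcal{A},X)$, endowed with the $\mathcal{A}$-bimodule structure of the Remark above, and define
\[
\Phi:\mathcal{BL}^{n+1}(\mathcal{A},X)\longrightarrow\mathcal{BL}^1(\mathcal{A},M),\qquad (\Phi T)(a)(a_1,\dots,a_n):=T(a,a_1,\dots,a_n),
\]
together with the analogous map $\Psi:\mathcal{BL}^{n+2}(\mathcal{A},X)\to\mathcal{BL}^2(\mathcal{A},M)$ given by $(\Psi T)(a,b)(a_1,\dots,a_n):=T(a,b,a_1,\dots,a_n)$. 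Both $\Phi$ and $\Psi$ are isometric linear isomorphisms of Banach spaces, and $\Phi^{-1}$ is strongly continuous, since for nets $(S_i)$, $S$ in $\mathcal{BL}^1(\mathcal{A},M)$ one has $\|(\Phi^{-1}S_i-\Phi^{-1}S)(a,a_1,\dots,a_n)\|\le\|(S_i-S)(a)\|_M\,\|a_1\|\cdots\|a_n\|$.

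The core of the argument is to verify that $\Phi$ intertwines the Hochschild differentials of $X$ in degrees $n$ and $n+1$ with those of the module $M$ in degrees $0$ and $1$: explicitly, $\Phi\circ\delta^n=\delta^0_M$ on $\mathcal{BL}^n(\mathcal{A},X)=M$ and $\Psi\circ\delta^{n+1}=\delta^1_M\circ\Phi$ on $\mathcal{BL}^{n+1}(\mathcal{A},X)$, where $\delta^0_M(R)(a)=a\cdot R-R\cdot a$ and $\delta^1_M(S)(a,b)=a\cdot S(b)-S(ab)+S(a)\cdot b$ are formed with the bimodule actions of the Remark. This is a direct but somewhat lengthy computation: one substitutes the definitions of the left and (asymmetric) right actions on $M$, expands the alternating sums, and matches the resulting terms of $\delta^{n+1}T(a,b,a_1,\dots,a_n)$ one by one against those of $\delta^1_M(\Phi T)(a,b)(a_1,\dots,a_n)$; the right action on $\mathcal{BL}^n(\mathcal{A},X)$ was in fact designed precisely so that this bookkeeping closes.

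Granting these two identities, the diagram with rows $\mathcal{BL}^n(\mathcal{A},X)\xrightarrow{\delta^n}\mathcal{BL}^{n+1}(\mathcal{A},X)\xrightarrow{\delta^{n+1}}\mathcal{BL}^{n+2}(\mathcal{A},X)$ and $M\xrightarrow{\delta^0_M}\mathcal{BL}^1(\mathcal{A},M)\xrightarrow{\delta^1_M}\mathcal{BL}^2(\mathcal{A},M)$ with vertical maps $\mathrm{id}_M,\Phi,\Psi$ commutes, so (using injectivity of $\Psi$) $\Phi$ restricts to a bijection $\mathcal{Z}^{n+1}(\mathcal{A},X)\xrightarrow{\ \cong\ }\mathcal{Z}^1(\mathcal{A},M)$ and maps $\mathcal{B}^{n+1}(\mathcal{A},X)=\operatorname{ran}\delta^n$ onto $\mathcal{B}^1(\mathcal{A},M)=\operatorname{ran}\delta^0_M$. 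It then remains to pass to the strong closures of the coboundary spaces: once $\Phi\bigl(\overline{\mathcal{B}^{n+1}(\mathcal{A},X)}^{\text{ strong}}\bigr)=\overline{\mathcal{B}^1(\mathcal{A},M)}^{\text{ strong}}$ is established, $\Phi$ descends to the claimed isomorphism $\mathcal{H}^{n+1}_{app}(\mathcal{A},X)\cong\mathcal{H}^1_{app}(\mathcal{A},M)$ on quotients.

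The main obstacle is exactly this last equality of closures, and hence the handling of the strong topologies. Because the norm of $M=\mathcal{BL}^n(\mathcal{A},X)$ is a supremum over the unit ball, strong convergence in $\mathcal{BL}^1(\mathcal{A},M)$ is \emph{a priori} finer than strong (coordinatewise) convergence in $\mathcal{BL}^{n+1}(\mathcal{A},X)$: the inclusion $\overline{\mathcal{B}^1(\mathcal{A},M)}^{\text{ strong}}\subseteq\Phi\bigl(\overline{\mathcal{B}^{n+1}(\mathcal{A},X)}^{\text{ strong}}\bigr)$ is immediate from the strong continuity of $\Phi^{-1}$, but the reverse inclusion amounts to upgrading a coordinatewise approximation of a cocycle $T\in\mathcal{Z}^{n+1}(\mathcal{A},X)$ by coboundaries to one that is uniform over bounded sets in the last $n$ arguments. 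If the strong topology relevant to $\mathcal{H}^1_{app}(\mathcal{A},\mathcal{BL}^n(\mathcal{A},X))$ is read through $\Phi$ — that is, as coordinatewise convergence on $\mathcal{A}^{n+1}$ — this difficulty disappears and the two closures agree tautologically; in the strict reading it needs a separate lemma, and pinning down and settling that point is the part I expect to demand the most care.
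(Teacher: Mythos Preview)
Your approach is exactly the paper's: the maps you call $\Phi,\Psi$ are the paper's $\zeta^1,\zeta^2$, and the two commuting squares $\zeta^1\circ\delta^n=\delta^0_n$ and $\delta^1_n\circ\zeta^1=\zeta^2\circ\delta^{n+1}$ are precisely what the paper verifies to obtain $\mathcal{Z}^{n+1}\cong\mathcal{Z}^1$ and then to compare coboundary closures.

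On the point you flag as the main obstacle, you are in fact more careful than the paper. The paper's argument for the equality of strong closures is simply: ``since $\zeta^1$ is an isometric isomorphism, we have $\zeta^1 T=\lim_\alpha\zeta^1\delta^n S_\alpha$'', applied symmetrically in both directions. But being an isometric isomorphism concerns norms, not the strong topologies, and as you observe the strong topology on $\mathcal{BL}^1(\mathcal{A},M)$ (convergence in the operator norm of $M=\mathcal{BL}^n(\mathcal{A},X)$ at each $a$) is genuinely finer than pointwise convergence on $\mathcal{A}^{n+1}$. The direction the paper needs for $\zeta^1\bigl(\overline{\mathcal{B}^{n+1}}^{\mathrm{strong}}\bigr)\subseteq\overline{\mathcal{B}^1(\mathcal{A},M)}^{\mathrm{strong}}$ is exactly the one you identify as problematic, and the paper offers no argument beyond the isometry. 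So the gap you isolate is real and is present in the paper's own proof; it does not supply the ``separate lemma'' you anticipate, and under the strict reading of the strong topology on $\mathcal{BL}^1(\mathcal{A},M)$ the published argument is incomplete at the same step.
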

\begin{proof}
	For every $ n\in \mathbb{N} $, consider the coboundary operator
 $$\delta_n^1:\mathcal{BL}(\mathcal{A},\mathcal{BL}^n(\mathcal{A},X))\rightarrow  \mathcal{BL}^2(\mathcal{A},\mathcal{BL}^n(\mathcal{A},X)). $$

 By \cite[Lemma 2.4.5]{R} for $ i=1,2 $,
 we have an  isometric isomorphism
$$ \zeta^i:\mathcal{BL}^{n+i}(\mathcal{A},X)\rightarrow \mathcal{BL}^i(\mathcal{A},\mathcal{BL}^n(\mathcal{A},X)), $$
defined by $$ \zeta^iT(a_1,..., a_i)(a_{i+1},...,a_{i+n})=T(a_1,..., a_i,a_{i+1},...,a_{i+n}), \quad (a_1,...,a_{i+n}\in\mathcal{A},   T\in \mathcal{BL}^{n+i}(\mathcal{A},X)). $$
Then for every $ a, b \in \mathcal{A} $,
\begin{align*}
	\delta_n^1 \zeta^1(T)(a,b)(a_1,...,a_n)
	&=a\cdot (\zeta^1T)(b)(a_1,...,a_n)
	- (\zeta^1T)(ab)(a_1,...,a_n)
	+(\zeta^1T)(a)\cdot b(a_1,...,a_n) \\
&=a\cdot T(b,a_1,...,a_n)
	- T(ab,a_1,...,a_n)
	+T(a,ba_1,...,a_n)+... \\
   &\qquad\qquad\qquad\qquad\qquad\qquad\qquad +(-1)^nT(a,b,a_1,...,a_{n-1})\cdot a_n \\
    &=\delta^{n+1}	T(a,b,a_1,...,a_n)\\
    &=\zeta^2 \delta^{n+1}(T)(a,b)(a_1,...,a_n),
\end{align*}
so we have
\begin{eqnarray}
\delta_n^1\circ \zeta^1=\zeta^2\circ \delta^{n+1},
\end{eqnarray}
that is, the following diagram  commutes.
\begin{center}
	\begin{tikzpicture}
	\matrix [matrix of math nodes,row sep=1cm,column sep=1cm,minimum width=1cm]
	{
		|(A)| \displaystyle \mathcal{BL}^{n+1}(\mathcal{A},X)  &   |(B)|  \mathcal{BL}^{n+2}(\mathcal{A},X)    \\
		|(C)|   \mathcal{BL}(\mathcal{A},\mathcal{BL}^n(\mathcal{A},X))    &   |(D)|  \mathcal{BL}^{2}(\mathcal{A},\mathcal{BL}^n(\mathcal{A},X)). \\
	};
	\draw[->]    (A)-- node [above] { $ \delta^{n+1} $}(B);
	\draw[->]   (A)--  node [left] { $ \zeta^1 $} (C);
	\draw[->]  (C)-- node [below]   { $\delta_{n}^1$}(D);
	\draw[->]  (B)-- node [right]  {$ \zeta^2 $}(D);
	\end{tikzpicture}
\end{center}
 We show that
 \begin{eqnarray}\label{z}
 	\mathcal{Z}^{n+1} (\mathcal{A},X) \cong \mathcal{Z}^{1} (\mathcal{A},\mathcal{BL}^n(\mathcal{A},X)).
 \end{eqnarray}
$$
	T\in\mathcal{Z}^{n+1} (\mathcal{A},X)\Leftrightarrow  \delta^{n+1}T=0 \Leftrightarrow \delta_n^1 \zeta^1T=\zeta^2 \delta^{n+1}T=0 \Leftrightarrow \zeta^1T\in \mathcal{Z}^{1} (\mathcal{A},\mathcal{BL}^n(\mathcal{A},X)).
$$
Now we show that $ \overline{\mathcal{B}^{n+1}(\mathcal{A},X)}^{\text{ strong}}\cong \overline{\mathcal{B}^{1}(\mathcal{A}, \mathcal{BL}^n(\mathcal{A},X))}^{\text{ strong}} $.
The following diagram commutes
\begin{center}
	\begin{tikzpicture}
	\matrix [matrix of math nodes,row sep=1cm,column sep=1cm,minimum width=1cm]
	{
		|(A)| \displaystyle \mathcal{BL}^{n}(\mathcal{A},X)  &   |(B)|  \mathcal{BL}^{n+1}(\mathcal{A},X)    \\
		|(C)|   \mathcal{BL}^{n}(\mathcal{A},X)    &   |(D)|  \mathcal{BL}(\mathcal{A},\mathcal{BL}^n(\mathcal{A},X)), \\
	};
	\draw[->]    (A)-- node [above] { $ \delta^{n} $}(B);
	\draw[->]   (A)--  node [left] { $ id $} (C);
	\draw[->]  (C)-- node [below]   { $\delta_{n}^0$}(D);
	\draw[->]  (B)-- node [right]  {$ \zeta^1 $}(D);
	\end{tikzpicture}
\end{center}
because
 \begin{align*}
\zeta^1 \delta^{n}(T)(a)(a_1,...,a_n)
	&=\delta^{n}(T)(a)(a_1,...,a_n)         \\
	&=a\cdot T(a_1,...,a_n)          \\
	&- T(aa_1,...,a_n)+...           \\
	&+(-1)^nT(a,a_1,...,a_{n-1})\cdot a_n \\
	&=(a\cdot T)(a_1,...,a_n)-(T\cdot a)(a_1,...,a_n)\\
	&=\delta_{n}^0(T)(a)(a_1,...,a_n),
\end{align*}
 that is, $ \zeta^1 \circ\delta^{n}=\delta_{n}^0  $.
Now if $ T\in \overline{\mathcal{B}^{n+1}(\mathcal{A},X)}^{\text{ strong}} $, then there exists a net $ (S_\alpha)_\alpha\subseteq \mathcal{BL}^{n}(\mathcal{A},X) $ such that $ T=\lim _\alpha \delta^{n}S_\alpha. $ Since $ \zeta^1 $ is an isometric isomorphism, we have $$ \zeta^1 T=\lim _\alpha \zeta^1\delta^{n}S_\alpha=\lim _\alpha \delta^{0}_nS_\alpha .$$
 Hence $\zeta^1 T\in \overline{\mathcal{B}^{1}(\mathcal{A}, \mathcal{BL}^n(\mathcal{A},X))}^{\text{ strong}}. $

Conversely, let $ T\in \mathcal{BL}^{n+1}(\mathcal{A},X) $ be such that $ \zeta^1 T\in \overline{\mathcal{B}^{1}(\mathcal{A}, \mathcal{BL}^n(\mathcal{A},X))}^{\text{ strong}} $.
Thus there exists a net  $ (R_i)_i\subseteq \mathcal{BL}^{n}(\mathcal{A},X) $ such that
$\zeta^1 T =\lim _i \delta^{0}_nR_i=\lim _i\zeta^1\delta^{n}R_i.   $
Again, since $ \zeta^1 $ is an isometric isomorphism, we have $T= \lim _i\delta^{n}R_i $, that is, $ T\in \mathcal{B}^{n+1}(\mathcal{A},X).$
Therefore \begin{eqnarray}\label{b}
 \overline{\mathcal{B}^{n+1}(\mathcal{A},X)}^{\text{ strong}}\cong \overline{\mathcal{B}^{1}(\mathcal{A}, \mathcal{BL}^n(\mathcal{A},X))}^{\text{ strong}}.
\end{eqnarray}
Now by equations (\ref{z}) and (\ref{b}) we obtain $ \mathcal{H}^{n+1}_{app}(\mathcal{A},X) \cong \mathcal{H}^{1}_{app}(\mathcal{A},\mathcal{BL}^n(\mathcal{A},X)) $ as required.
\end{proof}

The conclusion before \cite[Proposition 5.1]{J} showes that a Banach algebra  $\mathcal{A}$ is amenable if and only if  $\mathcal{H}^n(\mathcal{A},X^*)=\{ 0\} $ for every Banach $\mathcal{A}$-bimodule $X$ and  for every $n\geq 1$.  For more detail, we refer the reader to  \cite[Theorem 2.4.7]{R}. The following shows this remains true for approximately amenable.
\begin{thm}
Let $ \mathcal{A} $ be a Banach algebra. Then the following are equivalent:
	\begin{enumerate}
		\item[(i)] $ \mathcal{A} $ is  approximate amenable.
		\item[(ii)] $\mathcal{H}^n_{app}(\mathcal{A},X^*)=\{ 0\} $ for every Banach $\mathcal{A}$-bimodule $X$ and  for every $n\geq 1$.
	\end{enumerate}		
\end{thm}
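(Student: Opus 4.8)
The plan is to derive both implications from the dimension-shifting isomorphism of Theorem~\ref{red} together with the observation that, by definition, $\mathcal{A}$ is approximately amenable precisely when $\mathcal{H}^1_{app}(\mathcal{A},X^*)=\{0\}$ for every Banach $\mathcal{A}$-bimodule $X$. The implication (ii)$\Rightarrow$(i) then needs no work: it is exactly the $n=1$ instance of (ii). For (i)$\Rightarrow$(ii) I would argue as follows. Fix a Banach $\mathcal{A}$-bimodule $X$. The case $n=1$ is again just the definition. For $n\geq 2$ I would apply Theorem~\ref{red} with $X$ replaced by the dual bimodule $X^*$ and $n$ replaced by $n-1$, obtaining
$$\mathcal{H}^n_{app}(\mathcal{A},X^*)\;\cong\;\mathcal{H}^1_{app}\bigl(\mathcal{A},\mathcal{BL}^{n-1}(\mathcal{A},X^*)\bigr),$$
so it suffices to show that the right-hand group is zero, and by (i) this holds as soon as $\mathcal{BL}^{n-1}(\mathcal{A},X^*)$, equipped with the $\mathcal{A}$-bimodule structure of the Remark preceding Theorem~\ref{red}, is (isomorphic to) a \emph{dual} Banach $\mathcal{A}$-bimodule.

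To see that, I would first recall the isometric Banach-space isomorphisms $\mathcal{BL}^m(\mathcal{A},X^*)\cong\mathcal{BL}(\mathcal{A}^{\hat{\otimes} m},X^*)\cong(\mathcal{A}^{\hat{\otimes} m}\hat{\otimes}X)^*$, obtained from the universal property of $\hat{\otimes}$ and the identification $(V\hat{\otimes}W)^*\cong\mathcal{BL}(V,W^*)$ quoted in Section~1, under which $T$ is paired with an elementary tensor by $\langle a_1\otimes\cdots\otimes a_m\otimes x,\,T\rangle=\langle x,T(a_1,\dots,a_m)\rangle$. Then I would check, using only the module identities \eqref{action}, that under this identification the left and right actions on $\mathcal{BL}^m(\mathcal{A},X^*)$ from the Remark are exactly the adjoints of a bounded pair of $\mathcal{A}$-module actions on $\mathcal{A}^{\hat{\otimes} m}\hat{\otimes}X$; for $m=1$ these work out to $c\triangleright(a\otimes x)=ca\otimes x-c\otimes(a\cdot x)$ and $(a\otimes x)\triangleleft c=a\otimes(x\cdot c)$, with the evident multi-index version, and matching signs, for general $m$. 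Granting this, $\mathcal{BL}^{n-1}(\mathcal{A},X^*)$ is a dual module, so (i) gives $\mathcal{H}^1_{app}(\mathcal{A},\mathcal{BL}^{n-1}(\mathcal{A},X^*))=\{0\}$, hence $\mathcal{H}^n_{app}(\mathcal{A},X^*)=\{0\}$ for all $n\geq1$.

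The main obstacle is precisely that last verification: confirming that the Hochschild-type bimodule structure placed on $\mathcal{BL}^m(\mathcal{A},X^*)$ is genuinely dual to a bimodule structure on $\mathcal{A}^{\hat{\otimes} m}\hat{\otimes}X$. This is a bookkeeping computation with signs and permutation of indices, but it is the same one that underlies the classical reduction-of-dimension theorem (cf. \cite[Proposition~5.1]{J} and \cite[Theorem~2.4.7]{R}), and moving from $\mathcal{H}^\bullet$ to $\mathcal{H}^\bullet_{app}$ changes nothing in it, since the strong-closure operations in the definition of approximate cohomology have already been absorbed into Theorem~\ref{red}. I would also point out that no extra hypothesis such as a bounded approximate identity is required here, because everything rests on Theorem~\ref{red}, which holds for arbitrary bimodules, and on the definitional description of approximate amenability.
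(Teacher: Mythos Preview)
Your proposal is correct and follows essentially the same route as the paper: both use Theorem~\ref{red} to reduce $\mathcal{H}^n_{app}(\mathcal{A},X^*)$ to $\mathcal{H}^1_{app}(\mathcal{A},\mathcal{BL}^{n-1}(\mathcal{A},X^*))$ and then invoke approximate amenability once $\mathcal{BL}^{n-1}(\mathcal{A},X^*)$ is identified with the dual of $Y=\mathcal{A}^{\hat{\otimes}(n-1)}\hat{\otimes}X$. The paper simply asserts the module isomorphism $Y^*\simeq\mathcal{BL}^{n-1}(\mathcal{A},X^*)$ and cites \cite[Theorem~2.4.7]{R}, whereas you spell out the predual actions explicitly; your extra bookkeeping is correct and is exactly the verification the paper is silently relying on.
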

\begin{proof} (ii)$\Rightarrow$(i) is clear. We only prove  assertion (i)$\Rightarrow$(ii).
	
Let $Y= \underbrace{\mathcal{A}\hat{\otimes} \mathcal{A}\hat{\otimes}\cdots\hat{\otimes}\mathcal{A}}_{(n-1)-times}\hat{\otimes}X$. 
Then $ Y^*\simeq {\mathcal{BL}^{n-1}(\mathcal{A},X^*)} $, so by assumption and by Theorem \ref{red} we have
$$\mathcal{H}^n_{app}(\mathcal{A},X^*)=\mathcal{H}^1_{app}(\mathcal{A},Y^*)=\{ 0\}.  $$
\end{proof}
\begin{pro}
Let $\mathcal{A}  $ be an approximately amenable Banach algebra and let $X$ be a commutative Banach 	$ \mathcal{A} $-bimodule. Then, $\mathcal{H}^{1}_{app}(\mathcal{A},X)=\mathcal{H}^{2}_{app}(\mathcal{A},X).$
\end{pro}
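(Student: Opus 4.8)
The plan is to prove that, under these hypotheses, \emph{both} groups are in fact trivial; the key points are that commutativity of $X$ passes to its dual spaces and that approximate amenability annihilates cohomology with coefficients in a dual bimodule. The two tools I would use are the characterization of approximate amenability established above --- $\mathcal{H}^{n}_{app}(\mathcal{A},Y^{*})=\{0\}$ for every Banach $\mathcal{A}$-bimodule $Y$ and every $n\ge 1$ --- and the reduction of dimension, Theorem \ref{red}.

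First I would observe that if $X$ is commutative then so are $X^{*}$ and $X^{**}$: from the dual actions in (\ref{action}), $a\cdot x=x\cdot a$ for all $a,x$ forces $a\cdot\varphi=\varphi\cdot a$ for all $\varphi\in X^{*}$, and one more dualization does the same for $X^{**}$. Since $X^{**}$ is a dual bimodule, $\mathcal{H}^{n}_{app}(\mathcal{A},X^{**})=\{0\}$ for all $n\ge 1$; and since $X^{**}$ is commutative, every inner derivation into $X^{**}$ is $0$, so $\mathcal{B}^{1}(\mathcal{A},X^{**})=\{0\}$, hence $\mathcal{Z}^{1}(\mathcal{A},X^{**})=\{0\}$: there is no nonzero continuous derivation $\mathcal{A}\to X^{**}$.

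For $n=1$, I would argue as follows: if $D\colon\mathcal{A}\to X$ is a continuous derivation, then composing with the canonical bimodule monomorphism $\kappa\colon X\hookrightarrow X^{**}$ produces a continuous derivation $\kappa\circ D\colon\mathcal{A}\to X^{**}$, which by the previous paragraph is $0$; injectivity of $\kappa$ gives $D=0$. Hence $\mathcal{Z}^{1}(\mathcal{A},X)=\{0\}$, and since $X$ is commutative also $\mathcal{B}^{1}(\mathcal{A},X)=\{0\}$, so $\mathcal{H}^{1}_{app}(\mathcal{A},X)=\{0\}$. For $n=2$, I would apply Theorem \ref{red} with $n=1$ to obtain $\mathcal{H}^{2}_{app}(\mathcal{A},X)\cong\mathcal{H}^{1}_{app}(\mathcal{A},\mathcal{BL}(\mathcal{A},X))$, and then aim to show $\mathcal{H}^{1}_{app}(\mathcal{A},\mathcal{BL}(\mathcal{A},X))=\{0\}$. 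Post-composition with $\kappa$ embeds $\mathcal{BL}(\mathcal{A},X)$ as a sub-bimodule of $\mathcal{BL}(\mathcal{A},X^{**})\cong(\mathcal{A}\hat{\otimes}X^{*})^{*}$, which is a dual bimodule, so $\mathcal{H}^{1}_{app}(\mathcal{A},\mathcal{BL}(\mathcal{A},X^{**}))=\{0\}$; combining this with an admissible short exact sequence of bimodules containing $\mathcal{BL}(\mathcal{A},X)$, the long exact sequence of Theorem \ref{long}, and the vanishing of $\mathcal{H}^{n}(\mathcal{A},-)$ on relatively injective bimodules together with Proposition \ref{comp}, one is led to $\mathcal{H}^{1}_{app}(\mathcal{A},\mathcal{BL}(\mathcal{A},X))=\{0\}$, whence $\mathcal{H}^{2}_{app}(\mathcal{A},X)=\{0\}=\mathcal{H}^{1}_{app}(\mathcal{A},X)$.

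The hard part is precisely this last step. Unlike $X$, the bimodule $\mathcal{BL}(\mathcal{A},X)$ (with the structure recalled just before Theorem \ref{red}) is \emph{not} commutative, so not all of its inner derivations vanish and the short ``no nonzero derivation'' argument breaks down. What is actually needed is to pass from $X^{**}$-coefficients back to $X$-coefficients --- that is, to know that the inner derivations approximating a given derivation into $\mathcal{BL}(\mathcal{A},X^{**})$ may be chosen within $\mathcal{BL}(\mathcal{A},X)$ --- and securing the requisite admissibility (equivalently, controlling the position of $X$ inside its bidual) is the main obstacle I expect to encounter.
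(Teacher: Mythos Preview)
Your argument for $n=1$ is exactly the paper's: embed $X$ in $X^{**}$, use that $X^{**}$ is a commutative dual module so that $\overline{\mathcal{B}^{1}(\mathcal{A},X^{**})}=\{0\}$, and conclude $\mathcal{Z}^{1}(\mathcal{A},X)=\{0\}$.

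For $n=2$ you take a different route from the paper, and---as you yourself flag---it does not close. Passing to $\mathcal{H}^{1}_{app}(\mathcal{A},\mathcal{BL}(\mathcal{A},X))$ via Theorem~\ref{red} throws away the one hypothesis that made the $n=1$ case work: commutativity. The module $\mathcal{BL}(\mathcal{A},X)$ is not commutative, so the ``no nonzero derivation'' trick is unavailable, and the machinery you then invoke (the long exact sequence of Theorem~\ref{long}, relative injectivity, Proposition~\ref{comp}) does not by itself produce a mechanism for pulling approximating inner derivations from $\mathcal{BL}(\mathcal{A},X^{**})$ back into $\mathcal{BL}(\mathcal{A},X)$. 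Your proposal therefore has a genuine gap at precisely the point you identify.

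The paper avoids this detour. It first establishes $\mathcal{H}^{2}_{app}(\mathcal{A},X^{**})=\{0\}$---this step \emph{does} use Theorem~\ref{red}, but only to recognise $\mathcal{BL}(\mathcal{A},X^{**})\cong(\mathcal{A}\hat{\otimes}X^{*})^{*}$ as a dual module, after which approximate amenability applies directly. Then, for $T\in\mathcal{Z}^{2}(\mathcal{A},X)\subseteq\mathcal{Z}^{2}(\mathcal{A},X^{**})=\overline{\mathcal{B}^{2}(\mathcal{A},X^{**})}^{\text{strong}}$, one takes a net of $2$-coboundaries with values in $X^{**}$ converging strongly to $T$, composes with the quotient map $q\colon X^{**}\to X^{**}/X$, and uses that $X^{**}/X$ is again a \emph{commutative} $\mathcal{A}$-bimodule: the first part of the argument therefore applies to $X^{**}/X$ and forces the relevant cocycles there to vanish, so the approximating coboundaries already take values in $X$ and $T\in\overline{\mathcal{B}^{2}(\mathcal{A},X)}^{\text{strong}}$. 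The idea you are missing is exactly this: stay at the level of $X\hookrightarrow X^{**}\twoheadrightarrow X^{**}/X$, where commutativity is preserved throughout, rather than passing to the non-commutative coefficient module $\mathcal{BL}(\mathcal{A},X)$.
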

\begin{proof}
Let $ D\in \mathcal{Z}^{1}(\mathcal{A},X)\subseteq \mathcal{Z}^{1}(\mathcal{A},X^{**}) $. Since 	$\mathcal{A}  $ is   approximately amenable, $ D \in \overline{\mathcal{B}^{1}(\mathcal{A},X^{**})}^{\text{ strong}}=\{0\} $, since $ X^{**} $ is a commutative Banach 	$ \mathcal{A} $-bimodule. Thus $ D=0 $, so $ \mathcal{H}^{1}_{app}(\mathcal{A},X)=\{0\}. $
	
	Next, we show that $ \mathcal{H}^{2}_{app}(\mathcal{A},X)=\{0\}. $
	By Theorem \ref{red} and by approximate amenability of $ \mathcal{A} $,
	\begin{align*}
		\mathcal{H}^{2}_{app}(\mathcal{A},X^{**})
		&=\mathcal{H}^{1}_{app}(\mathcal{A},\mathcal{BL}(\mathcal{A},X^{**}))\\
		&=\mathcal{H}^{1}_{app}(\mathcal{A},(\mathcal{A} \hat{\otimes}X^*)^*)\\
		&=\{0\}.
	\end{align*}
Now let $  T\in \mathcal{Z}^{2}(\mathcal{A},X)\subseteq \mathcal{Z}^{2}(\mathcal{A},X^{**})=\overline{\mathcal{B}^{2}(\mathcal{A},X^{**})}^{\text{ strong}} $.	
There exists a net $ (R_\alpha)_\alpha\subseteq \mathcal{B}^{2}(\mathcal{A},X^{**})$ such that $ R_\alpha \xrightarrow{\text{ strongly}} T.$  Let $ q:X^{**}\rightarrow X^{**}/X $ be the usual quotient map, so $ qR_\alpha\in \mathcal{B}^{2}(\mathcal{A},X^{**}/X) $. 	
Hence $ \delta qR_\alpha=q\delta R_\alpha=0 $, that is, $ qR_\alpha\in \mathcal{Z}^{1}(\mathcal{A},X^{**}/X)=\{0\} $, by the first part. Thus $ qR_\alpha=0, $
	so $ R_\alpha \in \mathcal{B}^{2}(\mathcal{A},X). $ Therefore $ T\in \overline{\mathcal{B}^{2}(\mathcal{A},X)}^{\text{ strong}} $ and the proof is complete.
\end{proof}
\begin{pro}
	Let $H$ be a subgroup of a discrete group $G$. If $ \mathcal{H}^{2}_{app}(\ell^1(G),X^*)= \{0\}$, for all Banach $ \ell^1(G) $-bimodules $X$, then $ \mathcal{H}^{2}_{app}(\ell^1(H),X^*)= \{0\}$ for all  Banach $ \ell^1(H) $-bimodules $X$.
\end{pro}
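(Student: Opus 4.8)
The strategy is a change-of-algebra (Shapiro type) argument. Fix a Banach $\ell^1(H)$-bimodule $X$; the task is to show $\mathcal{H}^2_{app}(\ell^1(H),X^*)=\{0\}$. The first ingredient is that restriction of functions gives a norm-one $\ell^1(H)$-bimodule projection $P\colon\ell^1(G)\to\ell^1(H)$, so that the inclusion $\ell^1(H)\hookrightarrow\ell^1(G)$ is a retract in the category of $\ell^1(H)$-bimodules; equivalently, writing $\mathcal{B}^e=\mathcal{B}\,\hat\otimes\,\mathcal{B}^{\mathrm{op}}$ for the enveloping algebra, the module $\ell^1(H)$ is an $\ell^1(H)^e$-retract of $\ell^1(G)$. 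The second ingredient is the induced $\ell^1(G)$-bimodule
$$ W:=\ell^1(G)\,\hat\otimes_{\ell^1(H)}\,X\,\hat\otimes_{\ell^1(H)}\,\ell^1(G)\ \cong\ \ell^1(G)^e\,\hat\otimes_{\ell^1(H)^e}\,X , $$
which is well behaved because $\ell^1(G)$ is a free left and right $\ell^1(H)$-module (choose transversals for $G/H$ and $H\backslash G$), hence $\ell^1(G)^e$ is a free left and right $\ell^1(H)^e$-module; consequently $W^*$ is the corresponding coinduced dual module ${}_{\ell^1(H)^e}\mathcal{BL}(\ell^1(G)^e,X^*)$.

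The heart of the argument is the claim that $\mathcal{H}^n_{app}(\ell^1(H),X^*)$ is a direct summand of $\mathcal{H}^n_{app}(\ell^1(G),W^*)$ for every $n$. Algebraically this is the Eckmann--Shapiro isomorphism $\mathcal{H}^n(\ell^1(G),W^*)\cong\mathrm{Ext}^n_{\ell^1(H)^e}(\ell^1(G),X^*)$ (restriction of scalars is exact and coinduction is exact, $\ell^1(G)^e$ being $\ell^1(H)^e$-projective) together with the splitting off of $\mathrm{Ext}^n_{\ell^1(H)^e}(\ell^1(H),X^*)=\mathcal{H}^n(\ell^1(H),X^*)$ as a direct summand of $\mathrm{Ext}^n_{\ell^1(H)^e}(\ell^1(G),X^*)$, coming from the $\ell^1(H)^e$-retract $P$. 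I would realise this at the level of the standard bar resolutions, so that the relevant $\mathrm{Hom}$-complexes are exactly the Hochschild cochain complexes $\mathcal{BL}^{\bullet}(\ell^1(G),W^*)$ and $\mathcal{BL}^{\bullet}(\ell^1(H),X^*)$. On one side, the map $R\colon\mathcal{BL}^{n}(\ell^1(G),W^*)\to\mathcal{BL}^{n}(\ell^1(H),X^*)$, $R(c)(h_1,\dots,h_n)=\iota^*\!\big(c(h_1,\dots,h_n)\big)$, where $\iota\colon X\to W$, $x\mapsto\delta_e\otimes x\otimes\delta_e$, is an $\ell^1(H)$-bimodule morphism, is a cochain map and is \emph{strongly continuous}, since $\iota^*$ is bounded and strong convergence is convergence pointwise in norm. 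On the other side, the comparison theorem applied to the bar resolutions and the retract $\ell^1(H)\hookrightarrow\ell^1(G)$ over $P$ furnishes, after passing to the $\mathrm{Hom}(-,X^*)$-complexes, a cochain map $C\colon\mathcal{BL}^{\bullet}(\ell^1(H),X^*)\to\mathcal{BL}^{\bullet}(\ell^1(G),W^*)$ (built from bounded operations) and a bounded cochain homotopy $h$ with $R\circ C=\mathrm{id}+\delta^{\,\bullet-1}_H\circ h+h\circ\delta^{\,\bullet}_H$.

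Granting this, the proposition follows. Let $s\in\mathcal{Z}^2(\ell^1(H),X^*)$ be a $2$-cocycle. Since $C$ is a cochain map, $C(s)\in\mathcal{Z}^2(\ell^1(G),W^*)$, so by hypothesis $C(s)\in\overline{\mathcal{B}^2(\ell^1(G),W^*)}^{\text{ strong}}$. Because $R$ is a strongly continuous cochain map it carries $\mathcal{B}^2(\ell^1(G),W^*)$ into $\mathcal{B}^2(\ell^1(H),X^*)$, hence (by Lemma \ref{1}) it carries $\overline{\mathcal{B}^2(\ell^1(G),W^*)}^{\text{ strong}}$ into $\overline{\mathcal{B}^2(\ell^1(H),X^*)}^{\text{ strong}}$; thus $R(C(s))\in\overline{\mathcal{B}^2(\ell^1(H),X^*)}^{\text{ strong}}$. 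On the other hand $R(C(s))=s+\delta^1_H(h(s))$, so $s=R(C(s))-\delta^1_H(h(s))$ is a difference of two elements of the strong-closed subspace $\overline{\mathcal{B}^2(\ell^1(H),X^*)}^{\text{ strong}}$, whence $s\in\overline{\mathcal{B}^2(\ell^1(H),X^*)}^{\text{ strong}}$. Therefore $\mathcal{H}^2_{app}(\ell^1(H),X^*)=\{0\}$, and $X$ was arbitrary. (Alternatively one can first apply Theorem \ref{red} to replace $\mathcal{H}^2_{app}$ by $\mathcal{H}^1_{app}$ of a derivation module and run the same induction in degree one, where $C$ can be written out explicitly from $P$ and the $\ell^1(G)$-action on $W$.)

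I expect the main obstacle to be the passage from the purely algebraic Eckmann--Shapiro statement to the topologised one, since approximate cohomology is defined through the Hochschild complex with its strong topology and the strong closures of the coboundary spaces, not as a derived functor. Thus one must (i) fix a concrete model (the bar resolutions) in which the $\mathrm{Hom}$-complexes are the Hochschild complexes, (ii) use the \emph{explicit} restriction $R$, which is manifestly strongly continuous, rather than the abstract Shapiro map, and (iii) invoke Lemma \ref{1} to ensure the strongly continuous cochain maps respect the strong closures $\overline{\mathcal{B}^n(\cdot)}^{\text{ strong}}$, so that everything descends to the quotients defining $\mathcal{H}^n_{app}$. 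The delicate verification inside (ii) is that this explicit $R$ really does induce the Shapiro projection on cohomology, equivalently that $R\circ C$ is cochain-homotopic to the identity; this is where the precise interplay of $P$, $\iota$ and the bar differentials has to be checked.
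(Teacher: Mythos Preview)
Your outline is correct and is the abstract Eckmann--Shapiro/coinduction formulation of what the paper carries out by hand. The paper follows Johnson's original argument: it first reduces (via \cite[Theorem 2.5]{J}) to the case where $X$ is a one-sided right $H$-module, then takes the concrete coinduced dual $Y^*=\ell^\infty(G/H,X^*)$ with the $G$-action written out using a fixed transversal $M$ for $G/H$ and the associated section $h\colon G\to H$, and gives explicit formulas for the extension map $T\mapsto\tilde T$ (your $C$) and for the restriction map ``evaluate at the coset $H$ and restrict the arguments to $H$'' (your $R$). Your bimodule $W^*$ is the two-sided analogue of the paper's $Y^*$; the paper's one-sided reduction is what lets it work with the simpler space $\ell^\infty(G/H,X^*)$ instead of your $\mathcal{BL}_{\ell^1(H)^e}(\ell^1(G)^e,X^*)$.

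One simplification you are overlooking: with the natural explicit choice of $C$ (the paper's $T\mapsto\tilde T$) one has $R\circ C=\mathrm{id}$ \emph{on the nose} at the cochain level, not merely up to a bounded homotopy. Once the transversal is normalised by $m(H)=e$, restricting $\tilde T$ to $H^2$ and evaluating at the trivial coset returns $T$ exactly; so the homotopy $h$ you worry about, and with it the ``delicate verification'' you flag, simply disappear. The only continuity needed is the strong continuity of $R$, which is immediate since $R$ is composition with a fixed bounded evaluation; $C$ need not be strongly continuous at all. The paper's explicit coordinates make this transparency the payoff; your formulation, on the other hand, makes clear that the argument is degree-independent and is really Shapiro's lemma combined with the $\ell^1(H)^e$-retract $P\colon\ell^1(G)\to\ell^1(H)$.
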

\begin{proof}
	Let $X$ be a Banach $ \ell^1(H) $-bimodule. By a conclusion similar to the prior of
	\cite[Theorem 2.5]{J}, we may assume that $X$ is only a right $H$-module. Let $M$ be a set of representatives of the right cosets of $H$ in $G$, that is, $ M\subseteq G $ and for all $ g \in G $, $ Hg\cap M $ contains exactly one element denoted by $ m(Hg) $. The set $M$ can be chosen with $ m(H)=e, $ the identity element.
	
	Define a map $ h:G\rightarrow H $ satisfying $ h(g)m(Hg)=g. $ Let $ Y=\ell^1(G/H)\hat{\otimes} X$ which we isometrically identify with $ \ell^1(G/H, X)$, by \cite[Theorem B.2.12]{R}. We define a right $G$-action on $Y$ by
	$$ (f\cdot g_1)(Hg_2)=f(Hg_2g_1^{-1})\cdot h(m(Hg_2g_1^{-1})g_1), $$
	where  $ f \in \ell^1(G/H, X) $.
	We only check the associativity of the action.
	Since $ m(Hm(Hg_1g_2^{-1})g_2)=m(Hg_1) $ for all $ g_1 $ and $ g_2 $, by the definition of $ h $
	\begin{eqnarray}
	h(m(Hg_3g_2^{-1}g_1^{-1})g_1)m(Hg_3g_2^{-1})g_2=m(Hg_3(g_1g_2)^{-1})g_1g_2,
	\end{eqnarray}
which yields $$ 	h(m(Hg_3g_2^{-1}g_1^{-1})g_1)h(m(Hg_3g_2^{-1})g_2)m(Hm(Hg_3(g_1g_2)^{-1})g_1g_2)=m(Hg_3(g_1g_2)^{-1})g_1g_2. $$	
	Hence
	\begin{align*}
	[(f\cdot g_1)\cdot g_2](Hg_3)
	&=f(Hg_3g_2^{-1}g_1^{-1})\cdot h(m(Hg_3g_2^{-1}g_1^{-1})g_1)\cdot h(m(Hg_3g_2^{-1})g_2)\\
	&= f(Hg_3g_2^{-1}g_1^{-1})\cdot h(m(Hg_3(g_1g_2)^{-1})g_1g_2)\\
	&=(f\cdot (g_1g_2))(Hg_3).
	\end{align*}
	The dual space of $Y$ is $ \ell^\infty(G/H,X^*) $ with the dual action
	$$(g_1\cdot F)(Hg_2)=h(m(hg_2)g_1)\cdot F(Hg_2g_1) \quad (F\in  \ell^\infty(G/H,X^*)).$$
	
If $ T\in \mathcal{Z}^{2}(H,X^*),  $ then we define $ \tilde{T}\in \ell^\infty(G^2,X^*)$ by
$$ \tilde{T}(g_1,g_2)(Hg_0)=T(h(m(Hg_0)g_1),h(m(Hg_0)g_1^{-1})h(m(Hg_0)g_1g_2).$$	
	If $ g_i\in G, (i=1,...,4) $ then substituting $ g_5=h(m(Hg_0)g_1), g_6=g_5^{-1}h(m(Hg_0)g_1g_2) $ and\\
	$ g_7=(g_5g_6)^{-1}h(m(Hg_0)g_1g_2g_3) $ in $ \delta T(g_5,g_6,g_7)=0, $
	we have $ \delta \tilde{T}(g_5,g_6,g_7)(Hg_0)=0 $, so that $ \tilde{T}\in \mathcal{Z}^{2}(G,X^*)$. Thus by assumption, there exists a net $ (S_\alpha)_\alpha\subseteq \ell^1(G, X^*) $ with $$\tilde{T}(h_1,h_2)(H)=\lim _\alpha \delta S_\alpha (h_1,h_2)(H). $$
	For each $ \alpha $ we define $ R_\alpha \in \ell^1(H, X^*) $ by
	$ R_\alpha (h_0)=S_\alpha (h_0)(H) $. Then $ T=\lim _\alpha \delta R_\alpha $, which means that $ T \in \overline{\mathcal{B}^{2}(H,X^*)}^{\text{ strong}} $.
	Thus $ \mathcal{H}^{2}_{app}(\ell^1(H),X^*)= \{0\}$.
\end{proof}

\end{document}